\theoremstyle{plain}
\newtheorem{theorem}{Theorem}[section]
\newtheorem{corollary}[theorem]{Corollary}
\newtheorem{lemma}[theorem]{Lemma}
\newtheorem{prop}[theorem]{Proposition}
\theoremstyle{definition}
\newtheorem{definition}[theorem]{Definition}
\newtheorem{example}[theorem]{Example}
\newtheorem{remark}[theorem]{Remark}
\numberwithin{equation}{section}
\newcommand{\frakp}{\mathfrak p}
\newcommand{\fm}{\mathfrak m}
\newcommand{\fn}{\mathfrak n}
\newcommand{\reg}{\operatorname*{reg}}
\newcommand{\depth}{\operatorname*{depth}}
\newcommand{\supp}{\operatorname{supp}}
\newcommand{\Supp}{\operatorname{Supp}}
\newcommand{\Tor}{\operatorname{Tor}}
\newcommand{\lcm}{\operatorname{lcm}}
\title{Componentwise Linear Ideals From Sums}
\author{Hailong Dao}  
\address{Department of Mathematics, University of Kansas, Lawrence, KS 66045-7523, USA}
\email{hdao@ku.edu}
\author{Sreehari Suresh-Babu}
\address{Department of Mathematics, University of Kansas, Lawrence, KS 66045-7523, USA}
\email{sreehari@ku.edu}
\begin{document}
\begin{abstract}
Let $I,J$ be componentwise linear ideals in a polynomial ring $S$. We study necessary and sufficient conditions for $I+J$ to be componentwise linear. We provide a complete characterization when $\dim S=2$. As a consequence, any componentwise linear monomial ideal in $k[x,y]$ has linear quotients using generators in non-decreasing degrees. In any dimension, we show that under mild compatibility conditions, one can build a componentwise linear ideal from a given collection of componentwise linear monomial ideals using only sum and product with square-free monomials. We provide numerous examples to demonstrate the optimality of our results.  
\end{abstract}
\dedicatory{Dedicated to the memories of J\"urgen Herzog}
\maketitle
\section{Introduction}\label{sec:intro}
Let $I$ be a homogeneous ideal in a polynomial ring $S$. If $I$ is generated in a single degree $d$, recall that $I$ is said to have linear resolution if all the entries in the matrices of the graded minimal resolution of $I$ are $0$ or of degree $1$. Equivalently, the Castelnuovo-Mumford regularity of $I$ is $d$. We say $I$ is componentwise linear if $I_{\langle j\rangle}$ has  linear resolution for all $j$, where $I_{\langle j\rangle}$ denotes the ideal generated by all forms of degree $j$ in $I$.

Introduced by Herzog and Hibi in \cite{HerzogHibi}, componentwise linear ideals have become a very active area of research. Many interesting classes of ideals are componentwise linear: ideals with linear quotients \cite{SharifanVarbaro}, weakly polymatroidal ideals \cite{FVT, MM}, and stable ideals. These ideals also carry remarkable connections to other objects of interest: completely $\fm$-full ideals \cite{HarimaWatanabe}, Koszul ideals \cite{HerzogIyengar, IyengarRomer} and sequentially Cohen-Macaulay simplicial complexes \cite{HRW}. For more comprehensive background and references, we recommend the recent survey \cite{HaTuyl}.

In this paper, we study the following question: when is a sum of ideals componentwise linear? Since adding two ideals is in many ways the simplest ideal operation around, it is highly desirable to have meaningful answers to this question. Clearly, serious assumptions are needed even when we add ideals with linear resolutions: the sum of $(x^2)$ and $(y^2)$ fails to have linear resolution.  

We begin by recalling a few definitions and standard results related to componentwise linearity in \Cref{section:prelims}. In \Cref{sec:sum}, we establish necessary and sufficient conditions for sum of two ideals to be componentwise linear. These results are inspired, and can be viewed as extensions, of many well-known  concepts in literature: ideals with linear quotients (which can be viewed as the precise condition for $I+(f)$ to be componentwise linear), the theory of $\fm$-full ideals, and a result by Hop \cite[Theorem 3.5]{Hop} that tracks the componentwise linearity along short exact sequences.

Our main result in \Cref{sec:sum} is \Cref{thm: maincwl}. Roughly speaking, it says that a system of componentwise linear ideals, under some mild compatibility conditions, can be added up to a componentwise linear ideal. The precise statement is as follows. We say that a collection of square-free monomials $\mathcal L$ is {\it full} if it is closed under taking least common multiple and if monomials $f,g\in \mathcal L$ with $f$ divides $g$, there is a subset  of variables $y_1,\dots, y_s$ such that $fy_1,fy_1y_2,\dots,fy_1...y_s=g$ are in $\mathcal L$. For instance, the sets $\{x,xy\}$ and $\{x,y,xy,xyz\}$ are full. 

\begin{theorem}\label{in_thm1}
    Let $\mathcal L$ be a full set of non-unit squarefree monomials in $S$. To each $f\in \mathcal{L}$, we assign a componentwise linear monomial ideal $I_f$ such that
    \begin{enumerate}
        \item The generators of $I_f$ involve only variables appearing in $f$.
        \item For any $f\in \mathcal{L}$ and any variable $z$ such that $zf\in\mathcal{L}$, we have $I_f\subseteq \mathfrak m I_{zf}$.
    \end{enumerate}
    Then $\sum_{f\in \mathcal{L}} fI_f$ is componentwise linear.
\end{theorem}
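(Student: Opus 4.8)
The plan is to induct on the cardinality of $\mathcal L$. When $|\mathcal L| = 1$, say $\mathcal L = \{f_0\}$, the ideal is $f_0 I_{f_0}$, and since multiplication by a monomial only shifts a minimal free resolution, $f_0 I_{f_0}$ is componentwise linear exactly because $I_{f_0}$ is; this is the base case. For the inductive step I would choose an element $f_0 \in \mathcal L$ that is \emph{minimal} under divisibility and set $\mathcal L' = \mathcal L\setminus\{f_0\}$. First I would check that $\mathcal L'$ is again full: any least common multiple of two elements of $\mathcal L'$ that equals $f_0$ would exhibit a proper divisor of $f_0$ lying in $\mathcal L$, contradicting minimality, so $\mathcal L'$ is closed under $\lcm$; and every saturated chain witnessing fullness inside $\mathcal L$ between two elements of $\mathcal L'$ consists of multiples of its bottom, hence cannot pass through the minimal element $f_0$. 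Since the restricted assignment $f\mapsto I_f$ still satisfies (1) and (2), the induction hypothesis gives that $K' := \sum_{f\in\mathcal L'} fI_f$ is componentwise linear, and it remains to prove that $K = f_0 I_{f_0} + K'$ is componentwise linear.

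For this last step I would invoke the two-ideal sum machinery of \Cref{sec:sum}. Writing $A = f_0 I_{f_0}$ and $B = K'$, both summands are componentwise linear, and I would use the Mayer--Vietoris sequence
\[
0 \longrightarrow A\cap B \longrightarrow A\oplus B \longrightarrow K \longrightarrow 0
\]
together with the short exact sequence tracking of componentwise linearity from \cite[Theorem 3.5]{Hop}. The point is that in each degree $d$ the truncated sequence $0\to (A\cap B)_{\langle d\rangle}\to A_{\langle d\rangle}\oplus B_{\langle d\rangle}\to K_{\langle d\rangle}\to 0$ is exact precisely when $A_{\langle d\rangle}\cap B_{\langle d\rangle} = (A\cap B)_{\langle d\rangle}$, and when that holds a standard regularity estimate along the sequence shows $K_{\langle d\rangle}$ has a linear resolution provided $A_{\langle d\rangle}$, $B_{\langle d\rangle}$ and $(A\cap B)_{\langle d\rangle}$ do. Thus the whole problem reduces to proving that (i) $A\cap B$ is componentwise linear and (ii) $A_{\langle d\rangle}\cap B_{\langle d\rangle} = (A\cap B)_{\langle d\rangle}$ for every $d$.

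To analyze the intersection I would use the identity $A\cap B = f_0\bigl(I_{f_0}\cap (K':f_0)\bigr)$, valid since $f_0$ is a nonzerodivisor, and again the fact that multiplication by $f_0$ preserves componentwise linearity, so it is enough to understand $I_{f_0}\cap (K':f_0)$. Because everything is monomial, colon is additive over sums, giving $(K':f_0) = \sum_{f\in\mathcal L'}(fI_f : f_0)$. For the immediate successors $f = f_0 z$ of $f_0$ (those $z$ with $f_0 z\in\mathcal L$) one has $(f_0 z I_{f_0 z}:f_0) = z I_{f_0 z}$, and here hypothesis (2), $I_{f_0}\subseteq\fm I_{f_0 z}$, is exactly what forces the relevant intersection into high enough degree. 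For a general $f\in\mathcal L'$ I would pass to $g_f = \lcm(f_0, f)$, which lies in $\mathcal L'$ by fullness and minimality of $f_0$, and use the saturated chain from $f_0$ up to $g_f$ to chain condition (2) and absorb the contribution of $f$ into those of the immediate successors. The support hypothesis (1) is what keeps these intersections from acquiring spurious generators, since any monomial of $A$ lying in $B$ must be divisible by a generator $f u'$ with $f\in\mathcal L'$. I expect the outcome to be that $I_{f_0}\cap(K':f_0)$ is again a sum $\sum_{g} g J_g$ indexed by a full set with the same compatibility, so that a secondary induction returns its componentwise linearity and an explicit monomial description yields the degreewise identity (ii).

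The main obstacle is exactly this last paragraph: controlling the monomial intersection $f_0 I_{f_0}\cap K'$ and verifying the degreewise compatibility (ii). Everything else is formal --- the reduction to two ideals, the short exact sequence, and the regularity bookkeeping --- whereas matching the intersection to the hypotheses (1) and (2) and to the fullness of $\mathcal L$ is where the real content lies; in particular it is here that condition (2) cannot be weakened from $\fm I_{zf}$ to $I_{zf}$, since otherwise the intersection would sit one degree too low and destroy linearity. As a consistency check I would also run the argument against the $\fm$-full and linear quotient special cases highlighted in \Cref{sec:sum}, where $f_0 I_{f_0}$ is, up to the shift by $f_0$, a single generator being adjoined to $K'$.
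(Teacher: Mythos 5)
Your overall architecture---induction on $|\mathcal L|$, removing a divisibility-minimal element $f_0$, checking that $\mathcal L'=\mathcal L\setminus\{f_0\}$ is still full, and reducing to the two-ideal sum $K=A+B$ with $A=f_0I_{f_0}$ and $B=\sum_{f\in\mathcal L'}fI_f$---matches the paper's proof of \Cref{thm: maincwl}, and your verification that $\mathcal L'$ is full is fine (indeed more careful than the paper's). The gap is in your second paragraph: condition (ii), the equality $A_{\langle d\rangle}\cap B_{\langle d\rangle}=(A\cap B)_{\langle d\rangle}$, is not merely the hard part---it is false even in the simplest instances of the theorem. Take $\mathcal L=\{x,xy\}$ with $I_x=(x^2)$ and $I_{xy}=(x,y)$; all hypotheses hold, and $K=(x^3,x^2y,xy^2)=x\fm^2$ is componentwise linear. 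Here $A=(x^3)$, $B=(x^2y,xy^2)$, and $A\cap B=(x^3y)$, so at $d=3$ you get $A_{\langle 3\rangle}\cap B_{\langle 3\rangle}=(x^3y)\neq 0=(A\cap B)_{\langle 3\rangle}$. The point is that the kernel of $A_{\langle d\rangle}\oplus B_{\langle d\rangle}\to K_{\langle d\rangle}$ is always $A_{\langle d\rangle}\cap B_{\langle d\rangle}$, which in general strictly contains $(A\cap B)_{\langle d\rangle}$ (truncation does not commute with intersection); and hypothesis (2) of the theorem \emph{forces} this strict containment, because it pushes $A\cap B$ to start one degree above the generators of $A$ and $B$, so whenever $A_{\langle d\rangle}$ and $B_{\langle d\rangle}$ are both nonzero in that generating degree, their intersection is a nonzero ideal while $(A\cap B)_{\langle d\rangle}=0$. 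Your third paragraph is aimed at producing (ii), so it cannot succeed no matter how the intersection computation is refined.

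What must replace (ii) is either the regularity bound $\reg\bigl(A_{\langle d\rangle}\cap B_{\langle d\rangle}\bigr)\le d+1$ for all $d$ (this is \Cref{lem:lin+lin} applied componentwise; in the example $\reg(x^3y)=4=d+1$, which is why $K$ is still componentwise linear), or, as the paper does, a criterion that avoids truncations entirely: by \Cref{HV} (resting on the Hop--Vu theorem, not \cite[Theorem 3.5]{Hop}), once one knows $A\cap B\subseteq\fm A\cap\fm B$, componentwise linearity of $K$ follows from componentwise linearity of $A\cap B$ together with the filtration condition $\fm^{s+1}A\cap\fm^{s+1}B=\fm^{s}(A\cap B)$ for all $s\ge 0$. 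The paper's intersection computation is close in spirit to your sketch (distribute over the sum, use \Cref{fIgJ}, use hypothesis (1) to trivialize the colons, use fullness and the chained form of (2) to absorb every term into the immediate successors of $f_0$), and it lands on the clean form $B\cap f_0I_{f_0}=\fn f_0I_{f_0}$ with $\fn$ a monomial prime whose support is disjoint from $\Supp(f_0I_{f_0})$; then \Cref{prop:disjoint} gives componentwise linearity of this intersection, and \Cref{cwl+cwl} supplies the filtration condition. That filtration verification is the substantive step your proposal never addresses, and it is what stands in for the false degreewise identity (ii).
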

We then give a number of examples to show that the assumptions we make are optimal. 

In the last section, we study the case of two variables, which is already quite subtle, in detail. We are able to give a complete and satisfying set of conditions for when the sum of two (not necessarily monomial) componentwise linear ideals to be componentwise linear. Below, $o(I)$ denotes the initial degree of the ideal $I$. 

\begin{theorem}[{\Cref{thm:fullsum}}]\label{in_thm2}
    Let $I,J\subseteq k[x,y]$ be componentwise linear  ideals. Then $I+J$ is componentwise linear  if and only if either of the following holds:
    \begin{enumerate}
        \item $o(I\cap J)=\max\{o(I),o(J)\}$;
        \item $o(I\cap J)=\max\{o(I),o(J)\}+1$ and $(I+J):\fm\neq I:\fm+ J:\fm$.
    \end{enumerate}
\end{theorem}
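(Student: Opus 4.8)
The backbone of the proof is the Mayer--Vietoris sequence, which I will use in both of the forms
\[
0\to I\cap J\to I\oplus J\to I+J\to 0,\qquad 0\to S/(I\cap J)\to S/I\oplus S/J\to S/(I+J)\to 0,
\]
together with two features special to $S=k[x,y]$. First, since $(I+J)_{\langle d\rangle}=I_{\langle d\rangle}+J_{\langle d\rangle}$ for every $d$, componentwise linearity of $I+J$ is equivalent to asking, in each degree $d$, whether a sum of two ideals generated in degree $d$ with linear resolution is again linearly resolved. Second, because $\depth$ and projective dimension are controlled here, an ideal of $k[x,y]$ generated in a single degree has linear resolution precisely when it has the shape $g\,\fm^{e}$ for a form $g$. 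I would record this classification first: it lets me write $I_{\langle d\rangle}=g_d\fm^{a_d}$ and $J_{\langle d\rangle}=h_d\fm^{b_d}$ and compute sums, intersections, and colons explicitly. Writing $c:=o(I\cap J)$ and noting $(I\cap J)_d=(I_{\langle d\rangle})_d\cap(J_{\langle d\rangle})_d$, the degree $c$ is the critical degree where an obstruction can first appear, and a preliminary step is to show, using the classification and the chain relations $\fm I_{\langle d\rangle}\subseteq I_{\langle d+1\rangle}$, that in the remaining degrees the two summands share the relevant common factor, so that $I_{\langle d\rangle}+J_{\langle d\rangle}$ is again of the form $g\fm^{e}$ and hence linearly resolved. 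This confines all the analysis to the single degree $c$.

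For sufficiency I would localize to degree $c$ and measure the excess $o(I\cap J)-\max\{o(I),o(J)\}$, which by the single-degree computation equals $\max\{\deg g'_c,\deg h'_c\}$, where $g'_c=g_c/\gcd(g_c,h_c)$ and $h'_c=h_c/\gcd(g_c,h_c)$. In case $(1)$ the excess is $0$, forcing $g_c$ and $h_c$ to be associate, so $I_{\langle c\rangle}+J_{\langle c\rangle}=g_c\fm^{\min\{a_c,b_c\}}$ is linearly resolved and $I+J$ is componentwise linear. Case $(2)$ is the delicate one: the excess is $1$, so the regularity estimate coming from the first exact sequence permits a single potentially nonlinear first syzygy of $I+J$, and linearity holds exactly when this candidate syzygy is absent. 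To detect it I would apply $\soc(-)=\operatorname{Hom}(k,-)$ to the second exact sequence, producing
\[
0\to\soc\bigl(S/(I\cap J)\bigr)\to\soc(S/I)\oplus\soc(S/J)\to\soc\bigl(S/(I+J)\bigr)\xrightarrow{\ \partial\ }\operatorname{Ext}^1_S\bigl(k,S/(I\cap J)\bigr).
\]
Since $\soc(S/K)=(K:\fm)/K$, the image of the middle map is exactly $(I:\fm+J:\fm)/(I+J)$, so the hypothesis $(I+J):\fm\neq I:\fm+J:\fm$ is equivalent to $\partial\neq 0$. The content of case $(2)$ is that $\partial\neq 0$ is precisely the statement that the extremal generator of $I\cap J$ in degree $c$ does not survive as a new socle element of $S/(I+J)$, so the candidate nonlinear syzygy is cancelled and $I+J$ remains componentwise linear.

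For necessity I would argue the contrapositive and exhibit a concrete gap in the Betti table. If $o(I\cap J)\ge\max\{o(I),o(J)\}+2$, the degree-$c$ generator of $I\cap J$ maps, through the connecting homomorphism of the first exact sequence, to a minimal first syzygy of $I+J$ sitting at least two degrees above the relevant generators; this is a genuine nonlinear strand, modeled on $(x^2)+(y^2)$, where $o(I\cap J)=4=\max\{o(I),o(J)\}+2$. If $o(I\cap J)=\max\{o(I),o(J)\}+1$ but $(I+J):\fm=I:\fm+J:\fm$, then $\partial=0$ in the socle sequence above, so no socle element of $S/(I+J)$ is absorbed; reading this in degree $c$ shows the extremal syzygy survives, and $I+J$ fails to be componentwise linear. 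I expect the main obstacle to be exactly this socle bookkeeping in both directions of case $(2)$: making the identification ``$\partial=0$ if and only if $(I+J):\fm=I:\fm+J:\fm$'' fully precise in the graded degree $c$, and confirming rigorously that every obstruction is concentrated in that single degree so that the local analysis governs the global conclusion.
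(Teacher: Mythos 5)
Your overall strategy---reduce to graded components, classify equigenerated ideals of $k[x,y]$ with linear resolution as $g\fm^{e}$, and analyze the sum degree by degree---is sound and genuinely different from the paper's argument (which runs through fullness, a general hyperplane section, and a length formula). The classification is correct, and the route can in principle be completed. But as written, several of your concrete claims are false. First, the analysis is \emph{not} confined to degree $c=o(I\cap J)$. Writing $I_{\langle d\rangle}=g_d\fm^{a_d}$ and $J_{\langle d\rangle}=h_d\fm^{b_d}$, one checks from \Cref{fIgJ} and \Cref{lem:lin+lin} that $(I+J)_{\langle d\rangle}$ has linear resolution precisely when $\deg\lcm(g_d,h_d)\le d+1$; moreover $\fm I_{\langle d\rangle}\subseteq I_{\langle d+1\rangle}$ forces $g_{d+1}\mid g_d$ and $h_{d+1}\mid h_d$, so $\phi(d)=\deg\lcm(g_d,h_d)-d$ is strictly decreasing. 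Hence the critical degree is $d_0=\max\{o(I),o(J)\}$, the first degree where both components are nonzero, while $c$ is merely the first degree where $\phi\le 0$: in case (2) degree $c=d_0+1$ is automatically unobstructed and the entire content sits in degree $c-1$, and when $o(I\cap J)\ge d_0+2$ the failure already occurs at degree $d_0$, never at $c$. Second, your excess formula is wrong: for $I=x(x,y)$, $J=y(x,y)$ the excess is $0$ but $\max\{\deg g'_c,\deg h'_c\}=1$. Indeed $o(I\cap J)$ cannot be read off from any single degree, because $\deg\lcm(g_d,h_d)$ can drop by more than one per step; in the paper's own example $I=(x^4,x^3y^2)$, $J=(y^4,x^2y^3)$ it takes the values $8,6,6$ in degrees $4,5,6$. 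Third, in case (1) the forms $g_c,h_c$ need not be associate: with $I=x\fm$, $J=y\fm$ one gets $I+J=\fm^2$, which is $\gcd(g_c,h_c)\fm^{\,c-\deg\gcd(g_c,h_c)}$ and not $g_c\fm^{\min\{a_c,b_c\}}$. The correct statement, provable by a dimension count in degree $d$, is that $\phi(d)\le 0$ forces $I_{\langle d\rangle}+J_{\langle d\rangle}=\gcd(g_d,h_d)\fm^{\,d-\deg\gcd(g_d,h_d)}$.

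The decisive gap, which you flag yourself, is case (2). You correctly identify the hypothesis $(I+J):\fm\neq I:\fm+J:\fm$ with nonvanishing of the connecting map $\partial$ in the $\operatorname{Hom}(k,-)$ sequence, but you never connect $\partial\neq 0$ with linearity of the critical component $(I+J)_{\langle d_0\rangle}$, i.e.\ with $\phi(d_0)=1$; the sentence about the extremal generator ``not surviving as a new socle element'' so that ``the candidate nonlinear syzygy is cancelled'' is a heuristic, not an argument, and establishing it in both directions is not bookkeeping---it is the whole theorem in this case. What is missing is a quantitative bridge between $o(I\cap J)$ and socles; this is exactly what the paper's \Cref{lem:ord} provides, namely $o(I\cap J)=\max\{o(I),o(J)\}+\ell((I+J):z\,/\,(I:\fm+J:\fm))$ for a general linear form $z$, proved by cutting the Mayer--Vietoris sequence by $z$ and using fullness of $I$ and $J$ to identify the image of $\soc(S/I)\oplus\soc(S/J)$ in $((I+J):z)/(I+J)$. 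With that lemma both implications of \Cref{thm:fullsum} are one-line length counts; your proposal contains no substitute for it. Finally, your necessity sketch when $o(I\cap J)\ge\max\{o(I),o(J)\}+2$ has the same defect: a first syzygy of $I+J$ two degrees above $\max\{o(I),o(J)\}$ does not by itself contradict componentwise linearity, since there may be minimal generators one degree below that syzygy---for instance $(x^2,xy,y^3)$ is componentwise linear yet has a minimal first syzygy in degree $4$. The correct conclusion is that the fixed component $(I+J)_{\langle d_0\rangle}$ fails to have linear resolution, which follows from the monotonicity of $\phi$ above, not from the syzygy produced by the connecting homomorphism.
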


As a consequence, we give a quick proof that in two variables, any componentwise linear monomial ideal has linear quotients, in a strong sense:

\begin{theorem}[{\Cref{cor_lq}}]\label{in_thm3}
    Let $I$ be a componentwise linear monomial ideal in $k[x,y]$. There exists an ordering of the minimal set of monomial generators $\{f_1,\ldots,f_s\}$ of $I$ such that $\deg f_1\leq \deg f_2\leq\cdots\leq \deg f_s$ and $(f_1,\ldots,f_j):f_{j+1}=(z_j)$ for all $1\leq j\leq s-1$, where each $z_j$ is a variable. 
\end{theorem}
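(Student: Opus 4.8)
The plan is to induct on the number $s$ of minimal monomial generators, peeling off one generator of maximal degree at each stage. Write the generators as $f_i = x^{a_i}y^{b_i}$ and sort them by decreasing $x$-exponent, so that $a_1 > a_2 > \cdots > a_s \geq 0$ and $0 \leq b_1 < b_2 < \cdots < b_s$; set $d_i = a_i + b_i = \deg f_i$. First I would record the combinatorial meaning of componentwise linearity in two variables: a monomial ideal generated in a single degree $d$ has linear resolution exactly when its generators are consecutive among the degree-$d$ monomials $x^d, x^{d-1}y,\dots,y^d$, so $I$ is componentwise linear iff for every $j$ the set of $y$-exponents $q$ with $x^{j-q}y^{q}\in I$ is an interval. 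Encoding this through the staircase function $\alpha(q)=\min\{a_i : b_i\le q\}$ and $h(q)=q+\alpha(q)$, the condition is equivalent to $h$ being valley-shaped (weakly decreasing then weakly increasing). Since $d_i=h(b_i)$, the degree sequence $d_1,\dots,d_s$ is then unimodal, and in particular a generator of maximal degree occurs at one of the two ends, $f_1$ or $f_s$.

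By the $x\leftrightarrow y$ symmetry assume $f_1$ has maximal degree $D$. I would set $I'=(f_2,\dots,f_s)$ and check that $I'$ is again componentwise linear: removing $f_1$, which carries the largest $x$-exponent, only deletes a low-$q$ tail from each graded piece (formally, the staircase function of $I'$ agrees with $h$ on $q\ge b_2$ and is infinite below), so each graded component of $I'$ remains an interval. Thus the inductive hypothesis applies to $I'$ and produces an ordering $f_2,\dots,f_s$ in non-decreasing degree with single-variable quotients. Placing $f_1$ last keeps the degrees non-decreasing, because $D\ge d_i$ for all $i$, and reading the successive peels in reverse yields the desired global order.

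It remains to show the new colon ideal $(f_2,\dots,f_s):f_1=I':f_1$ is generated by a single variable, and this is where I would invoke \Cref{in_thm2}. Since $f_1$ has the largest $x$-exponent, $\lcm(f_k,f_1)/f_1 = y^{\,b_k-b_1}$ for each $k\ge 2$, so $I':f_1=(y^{\,b_2-b_1})$ is a power of $y$; it suffices to prove $b_2-b_1=1$, equivalently $o(I':f_1)=1$. Apply \Cref{in_thm2} to the componentwise linear ideals $I'$ and $(f_1)$, whose sum is the componentwise linear ideal $I$. Because $f_1$ is a minimal generator we have $f_1\notin I'$, so $o(I'\cap (f_1)) = D + o(I':f_1) \ge D+1 > D = \max\{o(I'),o((f_1))\}$; hence alternative (1) fails. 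The theorem then forces alternative (2), giving $o(I'\cap(f_1))=D+1$, i.e. $o(I':f_1)=1$ and $I':f_1=(y)$. The base case $s=1$ is vacuous, completing the induction.

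The main obstacle, and the part deserving the most care, is the passage from \emph{generated by variables} to \emph{generated by a single variable}: componentwise linearity of $I$ alone only yields linear quotients in the usual sense, and it is the maximality of $\deg f_1$—forcing the failure of alternative (1) and hence alternative (2) in \Cref{in_thm2}—that pins the colon down to one variable. A secondary point requiring attention is verifying cleanly that peeling the extremal generator preserves componentwise linearity (so that the induction is legitimate) and that the extremal generator can always be chosen of maximal degree; both rest on the unimodality of the degree sequence established at the outset.
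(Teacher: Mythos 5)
Your proposal is correct, but it takes a genuinely different route from the paper's. The paper works bottom-up: it first proves \Cref{thm: ordering} by greedily adjoining, at each stage, a generator of minimal remaining degree chosen so that a variable multiple of it lies in $I_j\cap J$, certifying componentwise linearity of each partial ideal via \Cref{prop:maxlength} and \Cref{linearform}, and it then pins the colon down to a single variable by \Cref{principalNlinear}, whose proof uses the factorization $I=aJ$ with $J$ of finite colength and a gcd argument. You instead work top-down: you peel off a maximal-degree generator, justified by an explicit staircase analysis special to monomial ideals in two variables (componentwise linearity is equivalent to the valley-shape of $h(q)=q+\alpha(q)$, whence the degree sequence along the staircase is unimodal, the maximal-degree generator sits at an end, and deleting it preserves the interval property in every degree). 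This makes the colon computable by hand, $I':f_1=(y^{b_2-b_1})$, so you only need the order bound from \Cref{in_thm2} to force $b_2-b_1=1$; in fact you do not need the full strength of alternative (2) there, since the inequality $o(I'\cap(f_1))\leq \max\{o(I'),o(f_1)\}+1$ of \Cref{prop:maxlength} already suffices. As for what each approach buys: the paper's lemmas (\Cref{linearform}, \Cref{principalNlinear}) apply to arbitrary full, not necessarily monomial, ideals, and \Cref{thm: ordering} (every partial sum $I_j$ is componentwise linear) emerges as an intermediate result; your argument is more elementary and self-contained on the monomial side, identifies the colon variable explicitly, and yields as byproducts the unimodality of the degree sequence and the fact that removing the extremal maximal-degree generator of a componentwise linear monomial ideal in $k[x,y]$ again gives a componentwise linear ideal, which the paper does not state.
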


\section{Generalities}\label{section:prelims}
Let $k$ be a field of any characteristic, $S=k[x_1,\ldots,x_n]$ be the standard graded polynomial ring in $n$ variables over $k$, and $\fm=(x_1,\ldots,x_n)$ be the homogeneous maximal ideal. For convenience, such as when dealing with the concept of $\fm$-fullness, we will assume in what follows that $k$ is infinite. However, this condition can be dropped for most of our main applications, since assumptions and conclusions of those are preserved under ground field extensions.

All ideals are assumed to be homogeneous. If $I$ is a homogeneous ideal, write $\mu(I)$ for the minimal number of generators of $I$, and $o(I)$ for the order of $I$, i.e., the smallest degree of nonzero elements in $I$. If $M$ is a finitely generated graded $S$-module, we denote the Castelnuovo-Mumford regularity of $M$ by $\reg M$ and the length of $M$ by $\ell(M)$.

For a monomial ideal $I$, we let $G(I)$ the minimal system of monomial generators of $I$. The support of $I$ is $\Supp(I)=\cup_{f\in G(I)} \supp(f)$, where the support of a monomial $x^a=x_1^{a_1}\cdots x_n^{a_n}$ is $\supp(x^a)=\{x_i\mid a_i> 0\}$. 

We now collect a number of definitions and results related to componentwise linear ideals.
\begin{definition}[\cite{Watanabe}]
    A homogeneous ideal $I\subseteq S$ is said to be \emph{$\fm$-full} if there exists a linear form $x$ such that $\fm I:x=I$.
\end{definition}
\begin{definition}[\cite{HunekeSwanson}]
     A homogeneous ideal $I\subseteq S$ is said to be \emph{full} if there exists a linear form $x$ such that $I:x=I:\fm$.
\end{definition}
\begin{remark}\label{rmk:2dfull}
    If $I$ is $\fm$-full, then $\fm I: x=I$ for a general linear form $x$. Moreover, if $I$ is $\fm$-full, then it is full. The converse holds in $k[x,y]$ \cite[Proposition 14.1.6]{HunekeSwanson}.
\end{remark}
\begin{definition}[\cite{HarimaWatanabe}]
	A homogeneous ideal $I\subseteq S$ is called \emph{completely $\fm$-full} 
	\begin{enumerate}
		\item if $n=0$, then $I=0$;
		\item if $n>0$, then $\fm I:x=I$ and $(I,x)/(x)$ is completely $\fm$-full as an ideal of $S/(x)$, where $x$ is a general linear form.
	\end{enumerate}
\end{definition}
Let $I_{\langle j\rangle}$ denote the ideal generated by all degree $j$ forms in $I$.
\begin{lemma}[{\cite[Theorem 2.3]{HaTuyl}}]\label{lem: high deg}
    Let $I\subseteq S$ be a homogeneous ideal. Then $I_{\langle j\rangle}$ has a linear resolution for all $j\geq \reg I$.
\end{lemma}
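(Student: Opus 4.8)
The plan is to reduce the assertion ``$I_{\langle j\rangle}$ has a linear resolution'' to the numerical statement $\reg I_{\langle j\rangle}=j$, and to prove the latter by identifying $I_{\langle j\rangle}$ with the truncation $I_{\ge j}=\bigoplus_{m\ge j}I_m$ and estimating its regularity. Recall that for an ideal generated in a single degree, having a linear resolution is equivalent to the regularity equalling that degree. Since $I_{\langle j\rangle}$ is by definition generated in degree $j$, it suffices to show $\reg I_{\langle j\rangle}=j$ whenever $j\ge \reg I$. The inequality $\reg I_{\langle j\rangle}\ge j$ is automatic, because $I_{\langle j\rangle}$ is a nonzero module with minimal generators in degree $j$ (note $I_j\neq 0$ as $j\ge \reg I\ge o(I)$), so $\beta_{0,j}\ne 0$ forces $\reg\ge j$. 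Thus the whole content is the upper bound $\reg I_{\langle j\rangle}\le j$.

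For the upper bound I would first argue that $I_{\langle j\rangle}=I_{\ge j}$ for $j\ge\reg I$, so that the cleaner object $I_{\ge j}$ may be used. Consider the short exact sequence
\[
0\to I_{\ge j}\to I\to I/I_{\ge j}\to 0.
\]
The quotient $I/I_{\ge j}=\bigoplus_{i<j}I_i$ has finite length and is concentrated in degrees $\le j-1$, so $\reg(I/I_{\ge j})\le j-1$. The standard behavior of regularity along a short exact sequence gives
\[
\reg I_{\ge j}\le \max\{\reg I,\ \reg(I/I_{\ge j})+1\}\le\max\{j,\ j\}=j,
\]
using $\reg I\le j$. A module with regularity at most $j$ is generated in degrees $\le j$; since $I_{\ge j}$ lives only in degrees $\ge j$, it is in fact generated in degree exactly $j$, whence $I_{\ge j}=S\cdot(I_{\ge j})_j=I_{\langle j\rangle}$. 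Combining this identification with the displayed bound yields $\reg I_{\langle j\rangle}\le j$, and with the automatic lower bound we conclude $\reg I_{\langle j\rangle}=j$.

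The only delicate points are the two standard facts about regularity that the argument rests on: the submodule estimate $\reg A\le\max\{\reg B,\reg C+1\}$ for $0\to A\to B\to C\to 0$, and the fact that a module is generated in degrees bounded by its regularity. Both follow from the cohomological (or minimal free resolution) characterization of $\reg$, so I expect no genuine obstacle beyond correctly invoking this machinery; the main thing to watch is the edge behavior when $I/I_{\ge j}=0$ (that is, $j=o(I)$ with nothing below), in which case $I_{\ge j}=I$ and $\reg I\le j$ closes the argument directly.
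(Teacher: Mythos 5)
Your argument is correct, but be aware that the paper offers no proof to compare against: this lemma is quoted directly from the survey of H\`a and Van Tuyl (their Theorem 2.3), so what you have supplied is a self-contained proof of a cited result. Your route is the standard truncation argument, and its ingredients all check out. The reduction to showing $\reg I_{\langle j\rangle}=j$ is valid, since a module generated in the single degree $j$ has no Betti numbers below the linear strand (minimality forces degrees to strictly increase along the resolution), so regularity $j$ is equivalent to linearity of the resolution. The identification $I_{\langle j\rangle}=I_{\ge j}$ is the crux, and your derivation is sound: $I/I_{\ge j}$ has finite length and vanishes in degrees $\ge j$, so its regularity --- which for finite-length modules equals the top nonvanishing degree --- is at most $j-1$; the submodule estimate $\reg I_{\ge j}\le\max\{\reg I,\,\reg(I/I_{\ge j})+1\}\le j$ is exactly the inequality the paper records as \Cref{lemma:reg}; and generation in degrees bounded by the regularity then forces $I_{\ge j}=S\cdot (I_{\ge j})_j=I_{\langle j\rangle}$. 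Together with the lower bound $\reg I_{\langle j\rangle}\ge j$ (where you correctly note $I_j\neq 0$ because $j\ge\reg I\ge o(I)$; the statement is vacuous if $I=0$) and your handling of the edge case $I_{\ge j}=I$, this is a complete proof. One stylistic remark: rather than appealing to general cohomological machinery for the short-exact-sequence estimate, you could simply invoke \Cref{lemma:reg}, which the paper has already stated for exactly this purpose.
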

\begin{definition}[\cite{HerzogHibi}]
    We say a homogeneous ideal $I\subseteq S$ is \emph{componentwise linear} if $I_{\langle j\rangle}$ has a linear resolution for all $j\geq 0$.
\end{definition}
The main theorem in \cite[Theorem 1.1]{HarimaWatanabe} is that an ideal $I\subseteq S$ is completely $\fm$-full if and only if it is componentwise linear.
\begin{definition}[{\cite[Chapter 8]{MonIdeals}}]
    We say a homogeneous ideal $I\subseteq S$ has \emph{linear quotients} if there exists a minimal system of homogeneous generators $f_1,\ldots,f_s$ such that $(f_1,\ldots,f_{i-1}):f_i$ is generated by linear forms for all $i$.
\end{definition}

\begin{lemma}[{\cite[Lemma 3.1]{HoaTam}}]\label{lemma:reg}
    Let $0\to M\to N\to P\to 0$ be a short exact sequence of graded $S$-modules. Then $\reg P\leq\max\{\reg N, \reg M-1\}$ with equality if $\reg M\neq \reg N$ and $\reg M\leq \max\{\reg N, \reg P+1\}$ with equality if $\reg N\neq \reg P$.
\end{lemma}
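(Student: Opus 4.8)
The plan is to reduce everything to the standard characterization of Castelnuovo--Mumford regularity via graded Betti numbers together with the long exact sequence of $\Tor$. For a finitely generated graded $S$-module $L$, set $t_i(L)=\sup\{j : \Tor^S_i(k,L)_j\neq 0\}$ (with the convention $t_i(L)=-\infty$ when $\Tor^S_i(k,L)=0$), so that $\reg L=\max_i\{t_i(L)-i\}$. Applying $\Tor^S_\bullet(k,-)$ to the short exact sequence $0\to M\to N\to P\to 0$ produces the long exact sequence
\[
\cdots\to \Tor^S_{i+1}(k,P)\to \Tor^S_i(k,M)\to \Tor^S_i(k,N)\to \Tor^S_i(k,P)\to \Tor^S_{i-1}(k,M)\to\cdots,
\]
whose maps are homogeneous of degree $0$, so exactness holds in each graded piece.

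First I would read off three inequalities on the $t_i$ from the three relevant three-term segments. Exactness of $A\to B\to C$ at $B$ forces $B_j\neq 0\Rightarrow A_j\neq 0$ or $C_j\neq 0$, hence $\max\{j:B_j\neq 0\}\le\max\{\max\{j:A_j\neq0\},\max\{j:C_j\neq0\}\}$. Applied to the segment $\Tor_i(k,N)\to \Tor_i(k,P)\to \Tor_{i-1}(k,M)$ this gives $t_i(P)\le\max\{t_i(N),t_{i-1}(M)\}$; applied to $\Tor_i(k,M)\to\Tor_i(k,N)\to\Tor_i(k,P)$ it gives $t_i(N)\le\max\{t_i(M),t_i(P)\}$; and applied to $\Tor_{i+1}(k,P)\to\Tor_i(k,M)\to\Tor_i(k,N)$ it gives $t_i(M)\le\max\{t_{i+1}(P),t_i(N)\}$. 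Subtracting $i$ and taking the maximum over $i$, using $t_{i-1}(M)-i=(t_{i-1}(M)-(i-1))-1$ and $t_{i+1}(P)-i=(t_{i+1}(P)-(i+1))+1$, converts these into
\begin{align*}
\reg P &\le \max\{\reg N,\ \reg M-1\},\\
\reg N &\le \max\{\reg M,\ \reg P\},\\
\reg M &\le \max\{\reg N,\ \reg P+1\}.
\end{align*}
Besides the two bounds named in the statement, this records the auxiliary bound $\reg N\le\max\{\reg M,\reg P\}$, which is exactly what the equality assertions will require.

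Finally I would deduce the equalities purely formally from these three integer inequalities. Write $a=\reg M$, $b=\reg N$, $c=\reg P$, so the bounds read $c\le\max\{b,a-1\}$, $b\le\max\{a,c\}$, and $a\le\max\{b,c+1\}$. For equality in $c\le\max\{b,a-1\}$ when $a\neq b$: if $a>b$ then $a-1\ge b$, so $\max\{b,a-1\}=a-1$ and the first bound gives $c\le a-1$, while $a\le\max\{b,c+1\}$ with $a>b$ forces $c+1\ge a$, whence $c=a-1=\max\{b,a-1\}$; if $a<b$ then $\max\{b,a-1\}=b$, the bound $b\le\max\{a,c\}$ with $b>a$ gives $c\ge b$, and $c\le\max\{b,a-1\}=b$, so $c=b$. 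A symmetric argument gives equality in $a\le\max\{b,c+1\}$ when $b\neq c$: if $b>c$ then $b\le\max\{a,c\}$ forces $a\ge b$ while $a\le\max\{b,c+1\}=b$, so $a=b$; if $b<c$ then $c\le\max\{b,a-1\}$ forces $a-1\ge c$ while $a\le\max\{b,c+1\}=c+1$, so $a=c+1$.

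The computational content lies entirely in the second paragraph, and it is routine once the long exact sequence is in hand. I do not expect a serious obstacle; the only point demanding care is that one must explicitly establish the unstated middle inequality $\reg N\le\max\{\reg M,\reg P\}$, since the two bounds literally appearing in the lemma do not by themselves pin down the equality cases, together with careful bookkeeping of the index shifts $t_{i-1}(M)-i$ and $t_{i+1}(P)-i$ and of the convention $t_i=-\infty$ when the relevant $\Tor$ vanishes. (One could equally run the same argument with the end degrees of the local cohomology modules $H^i_{\fm}(-)$ in place of the $t_i$, via the long exact sequence in local cohomology.)
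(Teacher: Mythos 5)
Your proof is correct: the three inequalities $\reg P\le\max\{\reg N,\reg M-1\}$, $\reg N\le\max\{\reg M,\reg P\}$, and $\reg M\le\max\{\reg N,\reg P+1\}$ are extracted accurately from the graded long exact sequence of $\Tor$ (with the index shifts and the $-\infty$ convention handled properly), and your purely formal case analysis---which rightly identifies that the unstated middle inequality $\reg N\le\max\{\reg M,\reg P\}$ is indispensable for the equality assertions---closes both equality cases. The paper itself offers no proof, citing Hoa--Tam, and your argument via $t_i(L)=\sup\{j:\Tor_i^S(k,L)_j\neq 0\}$ and $\reg L=\max_i\{t_i(L)-i\}$ is the standard route taken in such references, so your proposal matches the intended proof.
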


\section{Componentwise linearity of certain sums}\label{sec:sum}
We first write down a few necessary conditions on the regularity of $I\cap J$ if $I+J$ is componentwise linear.
\begin{lemma}\label{lem:regcwl1}
    Let $I,J\subseteq S$ be homogeneous ideals such that $I+J$ is componentwise linear. Then 
    \[
    \reg (I\cap J)\leq \max\{\reg I, \reg J\}+1.
    \]
\end{lemma}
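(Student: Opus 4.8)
The plan is to route everything through the Mayer--Vietoris short exact sequence
\[
0 \to I \cap J \to I \oplus J \to I + J \to 0,
\]
where the first map is $a \mapsto (a,-a)$ and the second is $(b,c)\mapsto b+c$. Applying the second inequality of \Cref{lemma:reg} with $M = I\cap J$, $N = I\oplus J$, and $P = I+J$, together with the identity $\reg(I\oplus J)=\max\{\reg I,\reg J\}$ (regularity of a direct sum is the maximum, since graded Betti numbers add), gives
\[
\reg(I\cap J) \le \max\{\reg I,\ \reg J,\ \reg(I+J)+1\}.
\]
Thus the lemma reduces to the single estimate $\reg(I+J)\le \max\{\reg I,\reg J\}$: once that holds, the term $\reg(I+J)+1$ is dominated by $\max\{\reg I,\reg J\}+1$, while the other two terms are trivially at most $\max\{\reg I,\reg J\}+1$, and the claimed bound follows.

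To establish $\reg(I+J)\le\max\{\reg I,\reg J\}$, I would first recall the general fact that the minimal generators of any homogeneous ideal $K$ lie in degrees at most $\reg K$ (since $\beta_{0,j}(K)\ne 0$ forces $j\le\reg K$). Hence $I$ and $J$ are generated in degrees $\le\reg I$ and $\le\reg J$ respectively, and since the union of a minimal generating set of $I$ with one of $J$ generates $I+J$, the sum $I+J$ is generated in degrees $\le\max\{\reg I,\reg J\}$. This is exactly where the hypothesis is used: because $I+J$ is componentwise linear, its regularity equals the top degree of a minimal generator, i.e.\ $\reg(I+J)=\max\{j:\beta_{0,j}(I+J)\ne 0\}\le\max\{\reg I,\reg J\}$.

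The only nonformal ingredient, and the step I would flag as the crux, is the assertion that a componentwise linear ideal has regularity equal to its maximal generator degree; the rest is bookkeeping with \Cref{lemma:reg} and direct sums. This is a standard property of componentwise linear ideals and can be cited directly, or derived from the additivity of the graded Betti numbers of $I+J$ over its components $(I+J)_{\langle j\rangle}$, each of which has a linear resolution and therefore regularity equal to its single generation degree $j$.
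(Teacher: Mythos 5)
Your proof is correct and takes essentially the same route as the paper's: the Mayer--Vietoris sequence plus \Cref{lemma:reg} yields $\reg(I\cap J)\leq\max\{\reg I,\reg J,\reg(I+J)+1\}$, and componentwise linearity of $I+J$ is used exactly as in the paper---regularity equals top generator degree, which is at most $\max\{\reg I,\reg J\}$ since the generators of $I+J$ come from those of $I$ and $J$. The only difference is that you spell out the intermediate steps (the direct-sum regularity identity and the generator-degree bookkeeping) that the paper leaves implicit.
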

\begin{proof}
    We always have $\reg(I\cap J)\leq \max\{\reg I, \reg J, \reg(I+J)+1\}$. Since $I+J$ is a componentwise linear ideal, $\reg(I+J)$ is the maximum degree among the degrees of a minimal system of generators of $I+J$, so we get the desired inequality.
\end{proof}
\begin{prop}\label{prop:regcwl2}
    Let $I,J\subseteq S$ be homogeneous ideals and suppose that $I+J$ is a componentwise linear ideal. Then for a general linear form $x$, we have
    \[
    \reg\left(\frac{S}{(I\cap J, x)}\right)\leq
    \max\left\{\reg\left(\frac{S}{(I, x)}\right), \reg\left(\frac{S}{(J, x)}\right),\reg\left(\frac{S}{I+J}\right)+1\right\}.
    \]
\end{prop}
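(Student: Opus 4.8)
The plan is to deduce the inequality from the Mayer--Vietoris sequence
\[
0 \to \frac{S}{I\cap J} \xrightarrow{\phi} \frac{S}{I}\oplus\frac{S}{J} \xrightarrow{\psi} \frac{S}{I+J} \to 0
\]
by reducing modulo a general linear form $x$ and tracking regularity along the induced long exact sequence of $\Tor$. The guiding observation is that for any homogeneous ideal $K$ one has $S/(K,x) = (S/K)\otimes_S S/(x)$, so all four quotients in the target inequality arise from the modules above by tensoring with $\bar S := S/(x)$. Writing $P := S/(I+J)$ and $N := S/I\oplus S/J$, I would tensor the sequence with $\bar S$; since $\bar S$ has the Koszul resolution $0\to S(-1)\xrightarrow{x}S\to\bar S\to 0$, the only surviving higher $\Tor$ is $\Tor_1^S(-,\bar S)\cong (0:_{(-)}x)(-1)$, and the long exact sequence truncates to
\[
\Tor_1^S(P,\bar S) \xrightarrow{\delta} \frac{S}{(I\cap J,x)} \xrightarrow{\bar\phi} \frac{S}{(I,x)}\oplus\frac{S}{(J,x)} \xrightarrow{\bar\psi} \frac{S}{(I+J,x)} \to 0.
\]

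Next I would bound the two pieces of $S/(I\cap J,x)$ visible in this sequence. The kernel of $\bar\phi$ equals $\operatorname{im}\delta$, which is a quotient of $\Tor_1^S(P,\bar S) = (0:_P x)(-1)$. This is where generality of $x$ is essential: by prime avoidance $x$ lies outside every associated prime of $P$ except possibly $\fm$, so $(0:_P x)$ is a finite length module, contained in $H^0_{\fm}(P)$; hence its top nonzero degree is at most the top degree of $H^0_{\fm}(P)$, which is in turn at most $\reg P$. Consequently $\Tor_1^S(P,\bar S)$, and therefore its quotient $\ker\bar\phi$, has regularity at most $\reg P + 1 = \reg(S/(I+J))+1$. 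For the image, the exact sequence
\[
0 \to \operatorname{im}\bar\phi \to \frac{S}{(I,x)}\oplus\frac{S}{(J,x)} \to \frac{S}{(I+J,x)} \to 0
\]
together with \Cref{lemma:reg} gives $\reg(\operatorname{im}\bar\phi)\le\max\{\reg(S/(I,x)),\reg(S/(J,x)),\reg(S/(I+J,x))+1\}$. To replace $S/(I+J,x)$ by $S/(I+J)$ I would invoke the standard fact that cutting by a general linear form cannot raise regularity, $\reg(S/(I+J,x))\le\reg(S/(I+J))$, which follows from the very same finite length estimate for $(0:_P x)$ applied to $0\to (0:_Px)(-1)\to P(-1)\to xP\to 0$ and $0\to xP\to P\to P/xP\to 0$.

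Finally I would assemble the two bounds through the short exact sequence $0\to\ker\bar\phi\to S/(I\cap J,x)\to\operatorname{im}\bar\phi\to 0$ and the standard middle-term estimate $\reg(\text{middle})\le\max\{\reg(\text{sub}),\reg(\text{quotient})\}$, which produces exactly the asserted inequality. The main obstacle is the kernel term $\ker\bar\phi$: unlike the image, it is not a submodule of a sectioned quotient, so its regularity must be controlled indirectly via $\Tor_1^S(P,\bar S)$, and the argument closes only because generality of $x$ forces $(0:_P x)$ to have finite length with top degree governed by $\reg(S/(I+J))$. I expect this finite length input to be the crux of the proof; the componentwise linear hypothesis enters only through the ambient setup, where it lets one identify $\reg(S/(I+J))$ with one less than the top degree of a minimal generator of $I+J$.
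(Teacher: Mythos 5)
Your proof is correct, and it shares the paper's overall skeleton---tensor the Mayer--Vietoris sequence with $S/(x)$, split the resulting four-term exact sequence into $0\to\ker\bar\phi\to S/(I\cap J,x)\to\operatorname{im}\bar\phi\to 0$ and $0\to\operatorname{im}\bar\phi\to S/(I,x)\oplus S/(J,x)\to S/(I+J,x)\to 0$, bound the image term by \Cref{lemma:reg}, and assemble---but it departs from the paper at the one step carrying the real content: controlling the connecting term, a quotient of $\Tor_1^S(S/(I+J),S/(x))\cong\bigl(((I+J):x)/(I+J)\bigr)(-1)$. The paper gets this bound from the hypothesis: componentwise linear implies completely $\fm$-full, so $(I+J):x=(I+J):\fm$, hence the Tor module is the shifted socle of $S/(I+J)$, a $k$-vector space, and any graded quotient of it is isomorphic to a subspace, of top degree at most $\reg(S/(I+J))$. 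You instead get it from the generality of $x$ alone: filter-regularity forces $(0:_{S/(I+J)}x)\subseteq H^0_\fm(S/(I+J))$, a finite length module with top degree at most $\reg(S/(I+J))$, and quotients of finite length modules can only lose top degrees (this step is valid precisely because the regularity of a finite length module equals its top degree). Both arguments are sound, but yours proves strictly more: since componentwise linearity is never used, your argument establishes the inequality for arbitrary homogeneous ideals $I,J$ and a general linear form $x$, i.e., the hypothesis on $I+J$ could be dropped from \Cref{prop:regcwl2}. The only inaccuracy is your closing remark that the componentwise linear hypothesis enters through the ambient setup---it does not enter your argument anywhere; identifying regularity with the top generating degree is what \Cref{lem:regcwl1} uses, not this proposition. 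What the paper's route buys in exchange is coherence with its $\fm$-fullness machinery, the same mechanism it reuses later in \Cref{lem:ord} and \Cref{prop:maxlength}.
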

\begin{proof}
     Consider the short exact sequence
    \[
    0\to \frac{S}{I\cap J}\to \frac SI\oplus \frac SJ\to \frac S{I+J}\to 0.
    \]
    Apply $-\otimes S/(x)=\overline{(-)}$ to the above to obtain
    \begin{equation*}
        \cdots\to \frac{(I+J):x}{I+J}(-1)\to \overline{ \frac{S}{I\cap J}}\to \overline{\frac SI}\oplus \overline{\frac SJ}\to \overline{\frac S{I+J}}\to 0.
    \end{equation*}
    Since $I+J$ is componentwise linear, it is completely $\fm$-full, and in particular satisfies $(I+J):x=(I+J):\fm$. Break the above long exact sequence into two short exact sequences:
    \begin{equation}\label{eq:K}
        0\to K\to \overline{\frac SI}\oplus \overline{\frac SJ}\to \overline{\frac S{I+J}}\to 0
    \end{equation}
    and 
    \begin{equation}\label{eq:K1}
        0\to K_1(-1)\to \overline{ \frac{S}{I\cap J}}\to K\to 0.
    \end{equation}
    Note that $K_1$ is a $k$-vector subspace of $(I+J):\fm/(I+J)$, so $\reg K_1\leq \reg ((I+J):\fm/(I+J))\leq \reg (S/(I+J))$.
    \Cref{eq:K} gives
    \[
    \reg (K)\leq\max\{\reg (S/(I,x)), \reg(S/(J,x)),\reg(S/(I+J,x))+1\},
    \]
    and \Cref{eq:K1} gives
    \[
    \reg\left(\overline{ \frac{S}{I\cap J}}\right)\leq \max\{\reg K_1+1, \reg K\}.
    \]
    Combining the above two inequalities and noting $\reg(S/(I+J,x))\leq \reg (S/(I+J))$, we obtain the desired inequality.
\end{proof}
\begin{remark}
    If a linear form $x$ is a nonzerodivisor on $S/I$, then $\reg(S/I)=\reg(S/(I,x))$. But if $\depth S/I=0$, then, in general, we only have $\reg(S/(I,x))\leq \reg (S/I)$ for a general linear form $x$. 

    We now compare \Cref{lem:regcwl1} and \Cref{prop:regcwl2}. Let $S=k[a,b], I=(a^2,ab,b^m)$ and $J=(a^m,ab,b^2)$ for some $m\geq2$. Then $I+J=(a^2,ab,b^2)$ and $I\cap J=(a^m, ab, b^m)$. We note that $I+J$ is componentwise linear and \Cref{lem:regcwl1} yields
    \[
    m=\reg(I\cap J)\leq \max\{\reg I, \reg J\}+1=m+1.
    \]
    For a general linear form $x$, $\reg(S/(I,x))=o(I)-1$ for any ideal $I\subseteq S$, so \Cref{prop:regcwl2} gives
    \[
    1\leq \max\{1,1, 2\}=2.
    \]
\end{remark}
Next we collect some partial converses to \Cref{lem:regcwl1}.
\begin{lemma}\label{lem:cwl+linear}
    Let $I\subseteq S$ be a componentwise linear ideal and $J\subseteq S$ be an ideal with linear resolution, and suppose that $\reg J\geq \reg I$. If $\reg(I\cap J)=\reg J+1$, then $I+J$ is componentwise linear.
\end{lemma}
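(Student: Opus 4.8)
The plan is to verify the definition of componentwise linearity directly, by showing that $(I+J)_{\langle j\rangle}$ has a linear resolution for every $j$, after splitting into a low-degree range and a high-degree range. Set $d=o(J)=\reg J$, where these coincide because $J$ has a linear resolution; note also that $J$ is generated in degree $d$, so $J_j=0$ for all $j<d$. The key structural observation is that in degrees below $d$ the summand $J$ contributes nothing, so $I+J$ and $I$ are ``the same'' there, while in degrees at least $\reg(I+J)$ linearity is automatic.

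First I would control the high-degree range via regularity. Applying \Cref{lemma:reg} to the short exact sequence
\[
0\to I\cap J\to I\oplus J\to I+J\to 0,
\]
together with $\reg(I\oplus J)=\max\{\reg I,\reg J\}=d$ and the hypothesis $\reg(I\cap J)=d+1$, yields
\[
\reg(I+J)\le\max\{\reg(I\oplus J),\,\reg(I\cap J)-1\}=\max\{d,d\}=d.
\]
Hence, by \Cref{lem: high deg}, $(I+J)_{\langle j\rangle}$ has a linear resolution for every $j\ge \reg(I+J)$. (Observe that only the bound $\reg(I\cap J)\le d+1$ is actually used, so this step goes through under the weaker inequality as well.)

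For the remaining range $j<\reg(I+J)$, I would exploit that $\reg(I+J)\le d$, so any such $j$ satisfies $j<d$. Since $J_j=0$, we have $(I+J)_j=I_j$, and therefore $(I+J)_{\langle j\rangle}=I_{\langle j\rangle}$ as ideals. Because $I$ is componentwise linear, $I_{\langle j\rangle}$ has a linear resolution, and hence so does $(I+J)_{\langle j\rangle}$. Combining the two ranges covers all $j$, which shows $I+J$ is componentwise linear.

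I do not expect a serious obstacle here: once the high/low-degree split is made, the only genuine input is the regularity estimate obtained by feeding the standard sum–intersection sequence into \Cref{lemma:reg}, plus the elementary fact that $I+J$ agrees with $I$ in degrees below $o(J)$. The one point to state carefully is that \Cref{lem: high deg} already handles every $j\ge\reg(I+J)$, so the identity $(I+J)_{\langle j\rangle}=I_{\langle j\rangle}$ need only be invoked for $j<\reg(I+J)\le d$, precisely the range where it holds.
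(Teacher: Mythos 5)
Your proof is correct and follows essentially the same route as the paper: the short exact sequence $0\to I\cap J\to I\oplus J\to I+J\to 0$ fed into \Cref{lemma:reg} to get $\reg(I+J)\le\reg J$, then \Cref{lem: high deg} for high degrees and the identity $(I+J)_{\langle j\rangle}=I_{\langle j\rangle}$ for $j<\reg J$. The only (harmless) difference is that you use the inequality form of \Cref{lemma:reg} where the paper invokes the equality case, which is why your argument also works under the weaker hypothesis $\reg(I\cap J)\le\reg J+1$.
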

\begin{proof}
Consider the short exact sequence
    \[
    0\to I \cap J\to I\oplus J\to I+J\to 0.
    \]
    We note that $\reg(I\cap J)=\reg J+1\neq\reg J=\max\{\reg I,\reg J\}=\reg (I\oplus J)$, so by \Cref{lemma:reg}, $\reg(I+J)=\reg J$. Since $(I+J)_{\langle t\rangle}=I_{\langle t\rangle}$ for $t<\reg J$, it has a linear resolution. The case $t\geq \reg J$ is treated by \Cref{lem: high deg}.
\end{proof}
\begin{remark}
    It is not possible to drop the assumption $\reg J\geq \reg I$ in \Cref{lem:cwl+linear}. Let $S=k[x,y,z]$, $I=(xyz)(x,y,z)+(xy)(x,y)$ and $J=(yz)(y,z)$. Then $I$ is a componentwise linear ideal and $J$ has a linear resolution. Moreover, $I\cap J=(xyz^2, xy^2z)$ and $\reg I\cap J=\reg J+1$, but $I+J=(xy^2, x^2y, yz^2, y^2z)$ is not a componentwise linear ideal. 
    
    It is also not true that if $I$ and $J$ are componentwise linear with $\reg(I\cap J)=\max\{\reg I,\reg J\}+1$, then $I+J$ is componentwise linear. For example, take componentwise linear ideals $I=(x^4,x^3y^2)$ and $J=(y^4,y^3x^2)$ in $S=k[x,y]$. Then $I\cap J=(x^3y^3)$ and $\reg(I\cap J)=\max\{\reg I,\reg J\}+1$, but $I+J=(x^4,x^3y^2,y^3x^2,y^4)$ is not a componentwise linear ideal.
    
    One can also ask if it is possible to generalize \Cref{lem:cwl+linear} by dropping the restriction on $\reg J$, but adding the condition $\reg(I\cap J)=\max\{\reg I,\reg J\}+1$. The answer is no; see \Cref{eg:regplusone}.
\end{remark}
\begin{lemma}\label{lem:lin+lin}
    Let $I,J\subseteq S$ be ideals with $t$-linear resolutions. Then $I+J$ has $t$-linear resolution if and only if $\reg(I\cap J)\leq t+1$.
\end{lemma}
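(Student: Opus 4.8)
The plan is to apply the regularity estimates of \Cref{lemma:reg} to the canonical short exact sequence
\[
0 \to I \cap J \to I \oplus J \to I + J \to 0.
\]
Since $I$ and $J$ each have a $t$-linear resolution, they are generated in degree $t$ and $\reg I = \reg J = t$; hence $\reg(I \oplus J) = \max\{\reg I, \reg J\} = t$, and $I + J$ is generated in the single degree $t$. For an ideal generated in degree $t$, having a $t$-linear resolution is equivalent to $\reg(I+J) = t$, and since the degree-$t$ generators force $\reg(I+J) \geq t$, the whole statement reduces to the equivalence $\reg(I+J) = t \iff \reg(I \cap J) \leq t+1$.

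For the forward direction, I would feed the sequence into the second inequality of \Cref{lemma:reg} with $M = I\cap J$, $N = I \oplus J$, $P = I+J$: this gives $\reg(I\cap J) \leq \max\{\reg(I\oplus J), \reg(I+J)+1\}$. Assuming $I+J$ has a $t$-linear resolution, so that $\reg(I+J) = t$, we obtain $\reg(I\cap J) \leq \max\{t, t+1\} = t+1$, as desired.

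For the converse, suppose $\reg(I \cap J) \leq t+1$. The first inequality of \Cref{lemma:reg} gives
\[
\reg(I+J) \leq \max\{\reg(I\oplus J), \reg(I\cap J) - 1\} \leq \max\{t, t\} = t.
\]
Combined with the automatic lower bound $\reg(I+J) \geq t$ coming from the degree-$t$ generators, we conclude $\reg(I+J) = t$, i.e.\ $I+J$ has a $t$-linear resolution.

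The argument is essentially bookkeeping with regularities, so I do not expect a genuine obstacle; the only point requiring a moment's care is the reduction of ``$t$-linear resolution'' to ``$\reg = t$'', which hinges on the observation that $I+J$ is automatically generated in the single degree $t$. Note also that neither branch of \Cref{lemma:reg} needs its equality clause here, since both inequalities suffice to pin down the regularity once the lower bound is invoked.
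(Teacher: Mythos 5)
Your proof is correct, and it is organized quite differently from (and more uniformly than) the paper's. The paper writes out only the ``if'' direction, and does so by a case split: when $\reg(I\cap J)=t+1$ it invokes \Cref{lem:cwl+linear} (whose proof relies on the equality clause of \Cref{lemma:reg}), and when $\reg(I\cap J)=t$ it argues instead on the long exact sequence of $\Tor$ groups coming from $0\to I\cap J\to I\oplus J\to I+J\to 0$, checking that the graded pieces $\Tor_i(I+J,k)_{i+j}$ vanish for $j\neq t$; the ``only if'' direction is left implicit, being a special case of \Cref{lem:regcwl1} since an ideal with $t$-linear resolution is componentwise linear. You instead rest everything on a single observation: $I+J$ is minimally generated purely in degree $t$, so having a $t$-linear resolution is equivalent to $\reg(I+J)=t$, and the bound $\reg(I+J)\geq t$ is automatic from the degree-$t$ generators. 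With that in hand, both implications fall out of the two plain inequalities of \Cref{lemma:reg} applied to the same short exact sequence -- no case analysis, no equality clauses, no $\Tor$ computation. What your route buys is brevity and symmetry, plus an explicit proof of the forward implication that the paper omits; what the paper's route buys is reuse of \Cref{lem:cwl+linear} (which it needs anyway) and, in the $\Tor$-level argument, slightly finer information (vanishing of individual graded Betti numbers) than a pure regularity bound, though that extra precision is not needed for the statement. Both arguments are complete and correct.
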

\begin{proof}
    Suppose $\reg(I\cap J)\leq t+1$. If $\reg(I\cap J)=t+1$, then $I+J$ has a $t$-linear resolution by \Cref{lem:cwl+linear}. If not, then $\reg(I\cap J)=t$, since $o(I\cap J)\geq t$. Consider the long exact sequence of $k$-vector spaces
    \begin{equation}\label{eq:torles}
        \cdots\to \Tor_i(I,k)_{i+j}\oplus \Tor_i(J,k)_{i+j}\to \Tor_i(I+J,k)_{i+j}\to \Tor_{i-1}(I\cap J,k)_{i-1+j+1}\to\cdots.
    \end{equation}
    Note that if $j\leq t-1$, $\Tor_i(I+J,k)_{i+j}=0$, since $I+J$ is generated in degree $t$ or higher. If $j\geq t+1$, then using \Cref{eq:torles} we obtain $\Tor_i(I+J,k)_{i+j}=0$, so $I+J$ has $t$-linear resolution.
\end{proof}
\begin{prop}
    Let $I,J\subseteq S$ be componentwise linear ideals. Then $I+J$ is componentwise linear if and only if $\reg(I_{\langle t\rangle}\cap J_{\langle t\rangle})\leq t+1$ for every $t\geq 0$.
\end{prop}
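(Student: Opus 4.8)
The plan is to reduce the statement to \Cref{lem:lin+lin} applied one graded strand at a time. The crucial observation is the identity $(I+J)_{\langle t\rangle}=I_{\langle t\rangle}+J_{\langle t\rangle}$ for every $t$. Indeed, the degree-$t$ component satisfies $(I+J)_t=I_t+J_t$ as $k$-vector spaces, so the ideal generated by all degree-$t$ forms of $I+J$ is precisely the ideal generated by $I_t\cup J_t$, which is $I_{\langle t\rangle}+J_{\langle t\rangle}$. First I would record this identity.

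Next I would translate the definition of componentwise linearity through this identity: by definition $I+J$ is componentwise linear if and only if $(I+J)_{\langle t\rangle}=I_{\langle t\rangle}+J_{\langle t\rangle}$ has a ($t$-)linear resolution for every $t\geq 0$. Because $I$ and $J$ are themselves componentwise linear, the ideals $I_{\langle t\rangle}$ and $J_{\langle t\rangle}$ each have a $t$-linear resolution whenever they are nonzero.

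Now I would apply \Cref{lem:lin+lin} strandwise. Fixing $t$, if both $I_{\langle t\rangle}$ and $J_{\langle t\rangle}$ are nonzero, they have $t$-linear resolutions, so the lemma gives that $I_{\langle t\rangle}+J_{\langle t\rangle}$ has a $t$-linear resolution if and only if $\reg(I_{\langle t\rangle}\cap J_{\langle t\rangle})\leq t+1$. I would dispatch the degenerate strands separately: if, say, $I_{\langle t\rangle}=0$ (that is, $t<o(I)$), then $I_{\langle t\rangle}+J_{\langle t\rangle}=J_{\langle t\rangle}$ automatically has a $t$-linear resolution, while $I_{\langle t\rangle}\cap J_{\langle t\rangle}=0$ makes the regularity condition vacuous, so the equivalence still holds. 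Intersecting these equivalences over all $t$ yields the claim.

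The argument is essentially bookkeeping built on \Cref{lem:lin+lin}, so I do not anticipate a serious obstacle. The only points demanding care are verifying the graded identity $(I+J)_{\langle t\rangle}=I_{\langle t\rangle}+J_{\langle t\rangle}$ and handling the degenerate strands where one of $I_{\langle t\rangle}$, $J_{\langle t\rangle}$ vanishes, where the convention $\reg(0)=-\infty$ keeps the stated inequality meaningful.
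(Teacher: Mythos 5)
Your proposal is correct and follows exactly the paper's argument: the identity $(I+J)_{\langle t\rangle}=I_{\langle t\rangle}+J_{\langle t\rangle}$ followed by a strandwise application of \Cref{lem:lin+lin}. The extra care you take with the degenerate strands where $I_{\langle t\rangle}$ or $J_{\langle t\rangle}$ vanishes is a detail the paper leaves implicit, but it is the same proof.
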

\begin{proof}
    We have $(I+J)_{\langle t\rangle}= I_{\langle t\rangle}+ J_{\langle t\rangle}$, so we apply \Cref{lem:lin+lin} to the pair of ideals $I_{\langle t\rangle}, J_{\langle t\rangle}$ for each $t\geq 0$.
\end{proof}
\begin{prop}\label{HV}
    Let $I,J\subseteq S$ be componentwise linear ideals. Assume that $I\cap J\subseteq \fm I\cap \fm J$. Then $I+J$ is componentwise linear if and only if $I\cap J$ is componentwise linear and $\fm^{s+1}I\cap\fm^{s+1}J=\fm^{s}(I\cap J)$ for all $s\geq 0$. In fact, $I+J$ is componentwise linear if $I\cap J$ is componentwise linear and for each $s\geq 0$ at least one of the following holds: $(I\cap J)\cap \fm^{s+1}I=J\cap \fm^{s+1}I = \fm^s (I\cap J)$ or $(I\cap J)\cap\fm^{s+1}J=I\cap \fm^{s+1}J= \fm^s (I\cap J)$.
\end{prop}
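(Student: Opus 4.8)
The plan is to reduce everything to the criterion of the preceding proposition, namely that $I+J$ is componentwise linear if and only if $\reg(I_{\langle t\rangle}\cap J_{\langle t\rangle})\le t+1$ for every $t$, and then to translate that regularity condition into the stated $\fm$-power equalities through a single degreewise identity. The bridge I would establish first is that, for any homogeneous ideal, $(\fm^a I)_u = S_a\,I_{u-a}$, so that the degree-$u$ piece of $\fm^a I$ equals the degree-$u$ piece of $I_{\langle u-a\rangle}$. Taking $a=u-t$ and intersecting gives the engine of the whole argument,
\[
(I_{\langle t\rangle}\cap J_{\langle t\rangle})_u = (\fm^{u-t}I\cap\fm^{u-t}J)_u \quad\text{for every } u,
\]
which converts assertions about the components $I_{\langle t\rangle}\cap J_{\langle t\rangle}$ into assertions about the nested family $\fm^{a}I\cap\fm^{a}J$, and back. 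I would also record the elementary fact that for the components of any ideal $M$ one has $M_{\langle t\rangle}\cap M_{\langle t+1\rangle}=\fm M_{\langle t\rangle}$.

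For sufficiency, assume $I\cap J$ is componentwise linear and $\fm^{s+1}I\cap\fm^{s+1}J=\fm^s(I\cap J)$ for all $s$. Feeding these equalities into the bridge identity identifies $I_{\langle t\rangle}\cap J_{\langle t\rangle}$ exactly as $A+B$ with $A=(I\cap J)_{\langle t\rangle}$ and $B=(I\cap J)_{\langle t+1\rangle}$ (degree $t$ contributes $A$, the higher strands contribute $B$). Then $A\cap B=\fm A$, and componentwise linearity of $I\cap J$ gives $\reg A=t$, $\reg B=t+1$, and $\reg(A\cap B)=\reg(\fm A)\le t+1$; applying \Cref{lemma:reg} to $0\to A\cap B\to A\oplus B\to A+B\to 0$ bounds $\reg(I_{\langle t\rangle}\cap J_{\langle t\rangle})=\reg(A+B)\le t+1$ for every $t$, and the preceding proposition concludes.

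For necessity, suppose $I+J$ is componentwise linear, so $\reg(I_{\langle t\rangle}\cap J_{\langle t\rangle})\le t+1$; in particular $I_{\langle t\rangle}\cap J_{\langle t\rangle}$ is generated in degrees $\le t+1$. Via the bridge identity this generation statement is precisely $\fm^{a}I\cap\fm^{a}J=\fm(\fm^{a-1}I\cap\fm^{a-1}J)$ for $a\ge 2$; together with the hypothesis $I\cap J\subseteq\fm I\cap\fm J$ — which is exactly the case $s=0$, i.e. $\fm I\cap\fm J=I\cap J$ — a short induction yields $\fm^{s+1}I\cap\fm^{s+1}J=\fm^s(I\cap J)$ for all $s$. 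With these equalities I again obtain $I_{\langle t\rangle}\cap J_{\langle t\rangle}=A+B$ as above, and now the short exact sequence $0\to B\to A+B\to (A+B)/B\to 0$ has quotient $A/(A\cap B)=A/\fm A$, a copy of $(I\cap J)_t$ concentrated in degree $t$; hence \Cref{lemma:reg} gives $\reg B\le t+1$. Since $B=(I\cap J)_{\langle t+1\rangle}$ is generated in the single degree $t+1$, this forces a linear resolution, and letting $t$ vary shows $I\cap J$ is componentwise linear.

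The "in fact" clause then follows formally: because $\fm^{s+1}I\subseteq I$ one always has $(I\cap J)\cap\fm^{s+1}I=J\cap\fm^{s+1}I$, and $I\cap J\subseteq\fm I$ gives $\fm^s(I\cap J)\subseteq J\cap\fm^{s+1}I$, so either one-sided hypothesis forces $\fm^{s+1}I\cap\fm^{s+1}J\subseteq J\cap\fm^{s+1}I=\fm^s(I\cap J)$, i.e. the two-sided equality, reducing to the sufficiency direction. The step I expect to demand the most care is the bookkeeping in the bridge identity at the boundary degrees, where components and intersections vanish, and checking that the single instance $I\cap J\subseteq\fm I\cap\fm J$ correctly anchors the induction that turns ``generated in degrees $\le t+1$'' into the full chain of $\fm$-power equalities.
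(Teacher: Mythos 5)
Your proof is correct, but it takes a genuinely different route from the paper's. The paper proves the equivalence in essentially one stroke: it embeds $I\cap J$ diagonally into $I\oplus J$ via $a\mapsto(a,a)$ and invokes an external theorem of Hop and Vu (cited as [HopVu, Theorem 3.10]) on componentwise linearity along short exact sequences $0\to M\to N\to P\to 0$ with $M\subseteq\fm N$; that theorem says $P$ is componentwise linear if and only if $M$ is componentwise linear and $M\cap\fm^{s+1}N=\fm^s M$ for all $s\geq 0$, which translates exactly into the stated condition $\fm^{s+1}I\cap\fm^{s+1}J=\fm^s(I\cap J)$. The ``in fact'' clause is then the same lattice computation you give, so there the two arguments coincide. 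Your argument, by contrast, stays entirely inside the paper's own toolkit: you reduce to the proposition immediately preceding this one ($I+J$ componentwise linear iff $\reg(I_{\langle t\rangle}\cap J_{\langle t\rangle})\leq t+1$ for all $t$) and convert that regularity criterion into the $\fm$-power condition via the degreewise identity $(I_{\langle t\rangle}\cap J_{\langle t\rangle})_u=(\fm^{u-t}I\cap\fm^{u-t}J)_u$ for $u\geq t$, the resulting decomposition $I_{\langle t\rangle}\cap J_{\langle t\rangle}=(I\cap J)_{\langle t\rangle}+(I\cap J)_{\langle t+1\rangle}$, and \Cref{lemma:reg}. I checked the delicate points: the bridge identity indeed holds for $u\geq t$ (and is only used there); $(I\cap J)_{\langle t\rangle}\cap(I\cap J)_{\langle t+1\rangle}=\fm(I\cap J)_{\langle t\rangle}$ is a valid general fact; $\reg(\fm A)\leq t+1$ follows from \Cref{lem: high deg} since $\fm A=A_{\langle t+1\rangle}$ when $A$ is generated in degree $t$; and your passage from ``generated in degrees $\leq t+1$'' to $\fm^{a}I\cap\fm^{a}J=\fm(\fm^{a-1}I\cap\fm^{a-1}J)$ for all $a\geq 2$ is sound because both sides vanish in degrees below $a$, while the hypothesis $I\cap J\subseteq\fm I\cap\fm J$ correctly anchors the induction at $s=0$. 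As for what each approach buys: the paper's proof is shorter but its substance is outsourced to Hop--Vu; yours is self-contained modulo results already established earlier in the paper, it makes explicit where the hypothesis $I\cap J\subseteq\fm I\cap\fm J$ enters, and the bridge identity exposes the precise relationship between the regularity-of-components criterion and the $\fm$-power conditions, a connection the paper leaves implicit.
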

\begin{proof}
Consider the short exact sequence
    \[
    0\to I\cap J\to I\oplus J\to I+J\to 0.
    \]
    We note that $I\cap J$ embeds into $I\oplus J$ using the map $a\mapsto (a,a)$.
Under the given hypotheses, $I+J$ is componentwise linear if and only $I\cap J$ is componentwise linear and 
\begin{equation}\label{msplus}
    \{(a,a):a\in I\cap J\}\cap \fm^{s+1}(I\oplus J)=\fm^s\{(a,a):a\in I\cap J\}
\end{equation}
for all $s\geq 0$ \cite[Theorem 3.10]{HopVu}. \Cref{msplus} is equivalent to $(I\cap J)\cap\fm^{s+1}I\cap\fm^{s+1}J=\fm^{s}(I\cap J)$.

For the second statement, let $s\geq 0$ and suppose that $J\cap \fm^{s+1}I=\fm^s(I\cap J)$. We claim that $\fm^{s+1}I\cap \fm^{s+1} J=\fm^{s}(I\cap J)$. Indeed,
\begin{align*}
    \fm^{s+1}I\cap \fm^{s+1} J &=[\fm^{s+1}I\cap (I\cap J)]\cap \fm^{s+1} J\\
    &=\fm^{s}(I\cap J)\cap \fm^{s+1}J\\
    &=\fm^{s}(I\cap J).
\end{align*}
The last equality is true because of the assumption that $I\cap J\subseteq \fm J$. Thus, $\fm^{s+1}I\cap\fm^{s+1}J=\fm^s(I\cap J)$ for all $s\geq 0$ and since $I\cap J$ is componentwise linear by assumption, we are done by the first part.
\end{proof}
The following result will be used several times.
\begin{lemma}\label{fIgJ}
    Let $I,J\subseteq S$ be ideals and $f,g$ be nonzero elements in $S$. Then 
    \[
    fI\cap gJ=\lcm(f,g)((I:f')\cap (J:g')),
    \]
    where $f', g'$ are such that $\lcm(f,g)=ff'=gg'$.
\end{lemma}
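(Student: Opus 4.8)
The plan is to prove the formula $fI \cap gJ = \lcm(f,g)\bigl((I:f') \cap (J:g')\bigr)$ by establishing containment in both directions, working with the identities $\lcm(f,g) = ff' = gg'$ throughout. First I would reduce to showing equality of the two ideals by a direct element-chase, since both sides are ideals of $S$ and $f,g$ are nonzero in a domain, so no torsion obstructions arise.

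\textbf{The forward containment.} Suppose $h \in fI \cap gJ$. Then I can write $h = fa = gb$ for some $a \in I$ and $b \in J$. I would like to divide by $\lcm(f,g)$. Using $\lcm(f,g) = ff'$, the relation $f' \mid f'a$ together with $fa = gb$ and $\lcm(f,g) = gg'$ should let me show that $\lcm(f,g)$ divides $h$ and that the quotient $h/\lcm(f,g)$ lies in $I:f'$ (because $f \cdot (h/\lcm(f,g)) = (h/\lcm(f,g)) f = $ a suitable multiple exhibiting $f'\cdot(h/\lcm(f,g)) \in I$) and symmetrically in $J:g'$. The key computation is that since $fa = gb$, and $\gcd(f',g') = 1$ (which follows from $ff' = gg' = \lcm(f,g)$, as $f/\gcd = g'$ and $g/\gcd = f'$), unique factorization forces $\lcm(f,g) \mid h$, and the resulting quotient satisfies both membership conditions.

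\textbf{The reverse containment.} Conversely, take $c \in (I:f') \cap (J:g')$, so $f'c \in I$ and $g'c \in J$. Then $\lcm(f,g)\, c = f f' c = f(f'c) \in fI$, and likewise $\lcm(f,g)\, c = g g' c = g(g'c) \in gJ$, so $\lcm(f,g)\, c \in fI \cap gJ$. This direction is essentially immediate from the definitions and the two factorizations of $\lcm(f,g)$.

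The main obstacle is the forward direction, specifically justifying that $\lcm(f,g)$ actually divides an arbitrary common element $h = fa = gb$ and that the quotient lands in the stated colon ideals. The crux is the coprimality $\gcd(f',g')=1$: since $f'$ and $g'$ are the complementary cofactors in $\lcm(f,g) = ff' = gg'$, they share no common factor, so from $f'(f a) $ and the divisibility relations unique factorization in the polynomial ring $S$ gives the needed divisibility. I would state this coprimality explicitly as the first step, then the rest of the forward argument is a clean divisibility bookkeeping. One should also note the degenerate cases where $I$, $J$, or the intersection is zero are handled trivially, so attention can be restricted to nonzero $h$.
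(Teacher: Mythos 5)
Your proposal is correct and takes essentially the same approach as the paper: both prove the easy containment $\lcm(f,g)\bigl((I:f')\cap (J:g')\bigr)\subseteq fI\cap gJ$ directly from $\lcm(f,g)=ff'=gg'$, and for the other direction both write $h=fa=gb$, observe that $\lcm(f,g)$ divides $h$, and then use cancellation (as $S$ is a domain and $f,g\neq 0$) to see that the quotient $h/\lcm(f,g)$ lies in $(I:f')\cap (J:g')$. The only difference is cosmetic: you justify the divisibility $\lcm(f,g)\mid h$ explicitly via $\gcd(f',g')=1$ and unique factorization, while the paper simply invokes this as the defining property of the lcm in the UFD $S$.
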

\begin{proof}
    Note that if $u\in (I:f')$, then $\lcm(f,g)u=ff'u\in fI$, so one containment is clear. Conversely, let $u\in fI\cap gJ$. Let $u=fp=gq$ for $p\in I$ and $q\in J$. Since $u$ is divisible by $\lcm(f,g)$, we can write $u=\lcm(f,g) v$ for some $v\in S$. From $fp=ff' v$, we get $v\in (I:f')$, and from $gq=gg'v$, we get $v\in (J:g')$. 
\end{proof}
\begin{prop}\label{prop:disjoint}
    Let $J\subseteq S$ be a componentwise linear monomial ideal and $\fn\subseteq S$ be a monomial prime ideal such that $\Supp(\fn)\cap \Supp(J)=\emptyset$. Then $\fn J$ is a componentwise linear monomial ideal.
\end{prop}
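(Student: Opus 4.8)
\emph{Proof sketch.} The plan is to reduce componentwise linearity of $\fn J$ to a single statement about products, namely that multiplying an ideal with a linear resolution by $\fn$ (in disjoint variables) preserves linearity, and then to prove that statement by induction on the number of variables generating $\fn$, using \Cref{lem:lin+lin} at each step.

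First I would fix $\fn=(y_1,\dots,y_r)$ and record that, since none of the $y_i$ appears in any generator of $J$, every monomial of $\fn J$ has the shape $y_i h$ with $h$ a monomial of $J$ of one lower degree. From this I would establish the key reduction
\[
(\fn J)_{\langle d\rangle}=\fn\, J_{\langle d-1\rangle}\qquad\text{for all }d,
\]
because the degree-$d$ monomials of $\fn J$ are exactly the products $y_i h$ with $h$ a degree-$(d-1)$ monomial of $J$, and these generate $\fn J_{\langle d-1\rangle}$, an ideal generated in the single degree $d$. Since $J$ is componentwise linear, $L:=J_{\langle d-1\rangle}$ has a linear resolution (the case $L=0$ being vacuous); say it is generated in degree $e=d-1$, so $\reg L=e$. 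Thus it suffices to prove: whenever $L$ has an $e$-linear resolution and $\Supp(L)\cap\{y_1,\dots,y_r\}=\emptyset$, the product $\fn L$ has an $(e+1)$-linear resolution.

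For this I would induct on $r$, writing $\fn_i=(y_1,\dots,y_i)$. The base case $r=1$ is immediate, since $y_1L\cong L(-1)$ inherits an $(e+1)$-linear resolution. For the inductive step, assume $\fn_i L$ has an $(e+1)$-linear resolution. A direct monomial computation, using that $y_{i+1}$ is coprime to every generator of $\fn_i L$ and of $L$, gives
\[
\fn_i L\cap y_{i+1}L=y_{i+1}(\fn_i L\cap L)=y_{i+1}\,\fn_i L,
\]
whence $\reg(\fn_i L\cap y_{i+1}L)=\reg(\fn_i L)+1=e+2\le(e+1)+1$. As both $\fn_i L$ and $y_{i+1}L$ have $(e+1)$-linear resolutions, \Cref{lem:lin+lin} (with $t=e+1$) shows that $\fn_{i+1}L=\fn_i L+y_{i+1}L$ has an $(e+1)$-linear resolution. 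Taking $i=r$ yields that $\fn L=(\fn J)_{\langle d\rangle}$ has a linear resolution for every $d$, which is precisely the componentwise linearity of $\fn J$.

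The monomial bookkeeping behind the reduction and the bookkeeping in the intersection formula are routine. The step I expect to require the most care is the identity $\fn_i L\cap y_{i+1}L=y_{i+1}\,\fn_i L$: it is exactly the disjointness of supports that forces this intersection to be a degree shift of $\fn_i L$, which in turn places its regularity in the narrow range $e+2$ where \Cref{lem:lin+lin} applies. As an alternative to the induction, one could instead use the short exact sequence $0\to\fn L\to\fn\oplus L\to\fn+L\to0$ (noting $\fn\cap L=\fn L$) together with \Cref{lemma:reg}, after computing $\reg(\fn+L)=e$ from the fact that $y_1,\dots,y_r$ form a regular sequence on $S/L$; but the inductive argument keeps everything within the lemmas already developed.
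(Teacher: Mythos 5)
Your reduction $(\fn J)_{\langle d\rangle}=\fn\,J_{\langle d-1\rangle}$ is correct, but the very next step --- the claim that it therefore suffices to prove the product statement for ideals $L$ with a linear resolution \emph{and} $\Supp(L)\cap\{y_1,\dots,y_r\}=\emptyset$ --- is where the proof breaks. The ideal you must feed into that statement is $L=J_{\langle d-1\rangle}$, and its support is in general \emph{not} disjoint from $\Supp(\fn)$: the truncation is generated by all degree-$(d-1)$ monomial multiples of $G(J)$, and these involve the $y_i$. Concretely, take $J=(x)$ and $\fn=(y_1,y_2)$ in $k[x,y_1,y_2]$, so $L=J_{\langle 2\rangle}=(x^2,xy_1,xy_2)$. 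Then your key identity fails:
\[
y_1L\cap y_2L=(xy_1y_2)\ \neq\ (x^2y_1y_2,\,xy_1^2y_2,\,xy_1y_2^2)=y_2(y_1L),
\]
because $xy_1y_2=y_1(xy_2)=y_2(xy_1)$ lies in the intersection while $x\notin L$. (Your inner induction is fine \emph{under} the disjointness hypothesis --- by \Cref{fIgJ} one gets $y_jL\cap y_{i+1}L=y_jy_{i+1}L$ there --- but that hypothesis is exactly what truncation destroys.) Moreover, disjointness cannot simply be dropped from your auxiliary statement: by \cite[Example 2.1]{ConcaHerzog}, the ideal $J=(a^2b,abc,bcd,cd^2)$ has a linear resolution yet $(b,c)J$ is not componentwise linear. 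The same defect afflicts the alternative you sketch at the end, since both $\fn\cap L=\fn L$ and the claim that $y_1,\dots,y_r$ form a regular sequence on $S/L$ also require $\Supp(L)$ to avoid the $y_i$. This is precisely the trap described in the remark following \Cref{prop:disjoint}: one cannot prove the proposition by applying a product-with-disjoint-variables statement to $(\fn J)_{\langle j+1\rangle}=\fn J_{\langle j\rangle}$.

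The paper sidesteps truncations entirely: it inducts on the number of generators of $\fn$, writing $\fn J=\frakp J+x_mJ$ with $\frakp=(x_1,\dots,x_{m-1})$, computes $\frakp J\cap x_mJ=x_m\frakp J$ via \Cref{fIgJ} (legitimate because $G(J)$ --- unlike $G(J_{\langle j\rangle})$ --- avoids $\Supp(\fn)$), and then invokes the criterion of \Cref{HV}, which reduces everything to the monomial identity $\frakp J\cap\fm^{s+1}J=\fm^s\frakp J$, verified by a direct divisibility argument on $G(J)$. If you wanted to salvage your componentwise strategy, you would need to bound $\reg(\fn_iL\cap y_{i+1}L)$ for $L=J_{\langle d-1\rangle}$ without the failed identity; those intersections involve lower truncations of $J$ multiplied by pairs of $y$-variables, and controlling their regularity is essentially the original problem over again.
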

\begin{proof}
    For definiteness, we write $\fn=(x_1,\ldots,x_m)$ and we induct on $m$, the minimal number of generators of $\fn$. If $m=1$, then $\fn J=x_1J\cong J(-1)$, so $\fn J$ is componentwise linear.

    Now assume $m>2$ and the result is true for all smaller values of $m$. We write $\fn=\mathfrak p+ (x_m)$, where $\mathfrak p=(x_1,\ldots,x_{m-1})$. Then $\fn J=\frakp J+ x_m J$ and by induction, we know that $\frakp J, x_mJ$ are componentwise linear. Since intersection distributes over sum for monomial ideals,
    \begin{align*}
        \frakp J\cap x_mJ&=[x_1J+\cdots+ x_{m-1}J]\cap x_m J\\
        &=(x_1 J\cap x_m J)+ \cdots+ (x_{m-1}J \cap x_m J)\\
        &=x_1x_m J+\cdots+ x_{m-1}x_m J&& (\text{by \Cref{fIgJ}})\\
        &=x_m(\frakp J)\cong \frakp J(-1),
    \end{align*}
    so $\frakp J\cap x_m J$ is componentwise linear ideal. Also note that $\frakp J\cap x_m J=x_m \frakp J\subseteq \fm \frakp J\cap \fm x_m J$. Thus by \Cref{HV}, $\fn J$ is componentwise linear ideal if $x_m\frakp J\cap \fm^{s+1} x_mJ=\fm^s x_m \frakp J$ for all $s\geq 0$. The last equality is true if and only if $\frakp J\cap \fm^{s+1} J=\fm^{s}\frakp J$ for all $s\geq 0$.

    To this end, let $u\in \mathfrak pJ\cap\fm^{s+1}J$, so $u=xf=hg$ for some  monomials $f,g\in J$, $x\in G(\mathfrak p)$ and $h\in G(\mathfrak m^{s+1})$. If $x\mid h$, then $h=xh_1$ for some $h_1\in \mathfrak m^s$, so $u\in\mathfrak m^s\mathfrak pJ$. Otherwise, $x\mid g$, and since the minimal generators of $J$ do not involve variables from $\Supp(\mathfrak p)$, we must have $g\in\mathfrak p J$, and thus $u=hg\in \fm^{s+1}\mathfrak pJ\subseteq \fm^{s}\mathfrak pJ$.
\end{proof}
\begin{remark}
    We remark that the assumption $\Supp(\fn)$ and $\Supp(J)$ are disjoint is essential in \Cref{prop:disjoint}. To see why, let $S=k[a,b,c,d]$, $J=(a^2b,abc,bcd,cd^2)$ and $\mathfrak n=(b,c)$. Then $J$ even has a linear resolution, but $\fn J$ is not componentwise linear \cite[Example 2.1]{ConcaHerzog}. It is tempting to apply \cite[Corollary 2.3]{ConcaHerzog} to $(\mathfrak n J)_{\langle j+1\rangle}=\mathfrak n J_{\langle j\rangle}$ to prove \Cref{prop:disjoint}, but notice that $\Supp(J_{\langle j\rangle})$ may not be disjoint from $\Supp(\fn)$.
\end{remark}
\begin{prop}\label{cwl+cwl}
    Let $I,J\subseteq S$ be componentwise linear monomial ideals satisfying $I\cap J\subseteq\fm I$ and $I\cap J=\mathfrak n J$, where $\mathfrak n$ is a monomial prime ideal such that $\Supp(\mathfrak n)\cap \Supp(J)=\emptyset$. Then $I+J$ is a componentwise linear monomial ideal. 
\end{prop}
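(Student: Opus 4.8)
The plan is to verify the hypotheses of \Cref{HV} applied to the pair $I,J$ and invoke its sufficient (``in fact'') criterion. Thus I need three things: that $I\cap J$ is componentwise linear, that $I\cap J\subseteq \fm I\cap\fm J$, and that for every $s\geq 0$ at least one of the two alternatives in \Cref{HV} holds. I will check the alternative phrased in terms of $J$, namely
\[
(I\cap J)\cap\fm^{s+1}J=I\cap\fm^{s+1}J=\fm^s(I\cap J).
\]

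The first two inputs are immediate. Since $I\cap J=\fn J$ with $\Supp(\fn)\cap\Supp(J)=\emptyset$, \Cref{prop:disjoint} gives that $I\cap J$ is a componentwise linear monomial ideal. For the containment, $I\cap J\subseteq\fm I$ is assumed, and $I\cap J=\fn J\subseteq\fm J$ because $\fn\subseteq\fm$; hence $I\cap J\subseteq\fm I\cap\fm J$, which is the standing hypothesis of \Cref{HV}.

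It remains to establish the chosen alternative for each $s$. Because $\fm^{s+1}J\subseteq J$, we automatically have $I\cap\fm^{s+1}J=(I\cap J)\cap\fm^{s+1}J$, so the task collapses to the single equality $\fn J\cap\fm^{s+1}J=\fm^s\fn J$. The containment $\supseteq$ is clear from $\fm^s\fn\subseteq\fm^{s+1}$ together with $\fm^s\fn J\subseteq\fn J$. For $\subseteq$, I would argue on monomials. Take a monomial $u\in\fn J\cap\fm^{s+1}J$. Writing $u$ as a multiple of a generator of $\fm^{s+1}J$ yields $u=pgt$ with $\deg p=s+1$, $g\in G(J)$, and $t$ a monomial. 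On the other hand $u\in\fn J$ forces some variable $x_i\in\Supp(\fn)$ to divide $u$; since $\Supp(\fn)\cap\Supp(J)=\emptyset$ and $g\in G(J)$, this $x_i$ cannot divide $g$, so $x_i\mid pt$. As $\deg(pt)=s+1+\deg t\geq s+1$, after splitting off $x_i$ the remaining factor still has degree at least $s$; extracting $x_i$ together with any degree-$s$ divisor of what is left exhibits $u$ as an element of $\fm^s\cdot\fn\cdot J$, as needed.

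The main obstacle is precisely this last equality, and it is the only place where the disjointness of supports enters in an essential way: it is what guarantees that the variable $x_i$ coming from $\fn$ is ``free'' of the $J$-generator $g$, so that it can be peeled into the $\fn$-factor while leaving a genuine element of $J$ behind. Once $\fn J\cap\fm^{s+1}J=\fm^s\fn J$ holds for all $s$, the chosen alternative of \Cref{HV} holds for all $s$; since $I,J$ are componentwise linear, $I\cap J\subseteq\fm I\cap\fm J$, and $I\cap J$ is componentwise linear, \Cref{HV} yields that $I+J$ is componentwise linear.
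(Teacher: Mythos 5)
Your proposal is correct and follows essentially the same route as the paper: verify $I\cap J\subseteq\fm I\cap\fm J$, get componentwise linearity of $I\cap J=\fn J$ from \Cref{prop:disjoint}, and invoke the ``in fact'' criterion of \Cref{HV}, reducing everything to the equality $\fn J\cap\fm^{s+1}J=\fm^s\fn J$. The paper handles that last equality by citing the monomial computation already carried out in the proof of \Cref{prop:disjoint}, whereas you redo it inline with an equivalent support-disjointness argument; the content is the same.
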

\begin{proof}
    We have $I\cap J\subseteq \fm I\cap \fm J$ and $I\cap J=\mathfrak nJ$ is componentwise linear by \Cref{prop:disjoint}, so by \Cref{HV}, it suffices to show that $\mathfrak nJ\cap \fm^{s+1}J=\fm^s\mathfrak nJ$ for all $s\geq 0$. But this is exactly the calculation we did in the proof of \Cref{prop:disjoint}, so we are done.
\end{proof}
\begin{remark}
    In \Cref{cwl+cwl}, the assumption $I\cap J\subseteq \fm I$ is important. As an illustrative example, consider $S=k[x,y]$ and componentwise linear ideals $I=(x^3,x^2y^2, xy^3)$ and $J=(y^3)$. Then $I\cap J=(xy^3)=xJ$, but $I+J=(x^3,x^2y^2, y^3)$ is not componentwise linear.
\end{remark}

\begin{definition}
    Let $\mathcal{L}$ be a set of non-unit squarefree monomials in $S$. We say $\mathcal L$ is \emph{lcm-closed} if for every $f,g\in \mathcal{L}$, we have $\lcm(f,g)\in \mathcal L$. If $f,g\in \mathcal{L}$ and $f\mid g$, then a \emph{full path} in $\mathcal{L}$ from $f$ to $g$ is a collection of elements $g_1,g_2\ldots,g_l$ in $\mathcal{L}$ such that $g_1=f$, $g_l=g$ and $g_j=z_j g_{j-1}$ for all $2\leq j\leq l$, where $z_j$ is some variable in $S$. We say $\mathcal L$ is \emph{full} if it is lcm-closed and a full path exists between any two $f,g\in \mathcal{L}$ such that $f\mid g$.
\end{definition}
\begin{theorem}\label{thm: maincwl}
    Let $\mathcal L$ be a full set of non-unit squarefree monomials in $S$. To each $f\in \mathcal{L}$, we associate a componentwise linear monomial ideal $I_f$ such that
    \begin{enumerate}
        \item $G(I_f)\subseteq k[\supp f]$.
        \item For any $f\in \mathcal{L}$ and any variable $z$ such that $zf\in\mathcal{L}$, we have $I_f\subseteq \mathfrak m I_{zf}$.
    \end{enumerate}
    Then $\displaystyle\sum_{f\in \mathcal{L}} fI_f$ is componentwise linear.
\end{theorem}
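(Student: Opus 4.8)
The plan is to induct on $|\mathcal L|$. When $|\mathcal L|=1$, say $\mathcal L=\{f\}$, the sum is $fI_f\cong I_f(-\deg f)$, which is componentwise linear because $I_f$ is. For the inductive step I would pick a minimal element $f_0$ of $\mathcal L$ under divisibility and split $L:=\sum_{f\in\mathcal L}fI_f$ as $L=B+N$, where $N=f_0I_{f_0}$ and $B=\sum_{g\in\mathcal L\setminus\{f_0\}}gI_g$. The first routine checks are that $\mathcal L\setminus\{f_0\}$ is again full: any $\lcm$ of two remaining elements, and any monomial on a full path between two remaining elements, is divisible by one of those elements, hence $\geq$ it, so it cannot equal the strictly smaller $f_0$ (unless it is one of the given elements); and conditions (1),(2) are inherited verbatim. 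Thus $B$ is componentwise linear by the induction hypothesis, and $N\cong I_{f_0}(-\deg f_0)$ is componentwise linear as well.

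The proof then reduces to putting $B,N$ into the shape required by \Cref{cwl+cwl}, i.e. producing a monomial prime $\fn$ with $B\cap N=\fn N$, $\Supp(\fn)\cap\Supp(N)=\emptyset$, and $B\cap N\subseteq\fm B$. The computational engine is the identity $fI_f\cap gI_g=\lcm(f,g)(I_f\cap I_g)$, valid for all $f,g\in\mathcal L$: indeed \Cref{fIgJ} gives $fI_f\cap gI_g=\lcm(f,g)((I_f:f')\cap(I_g:g'))$ with $f'=\lcm(f,g)/f$, and since everything is squarefree, $\supp(f')=\supp(g)\setminus\supp(f)$ is disjoint from $\supp(f)\supseteq\Supp(I_f)$, whence $I_f:f'=I_f$ by condition (1) (and symmetrically for $g$). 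Distributing the intersection over the sum defining $B$ gives $B\cap N=\sum_{g\ne f_0}\lcm(g,f_0)(I_g\cap I_{f_0})$. I claim this equals $\fn N$ for $\fn=(\{z:\ zf_0\in\mathcal L\})$, which is a monomial prime, nonzero because some element of $\mathcal L$ lies strictly above $f_0$, and whose support lies outside $\supp(f_0)\supseteq\Supp(N)$.

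The heart of the argument, and the step I expect to be the main obstacle, is the inclusion $B\cap N\subseteq\fn N$; the reverse inclusion is immediate, since for a generator $z$ of $\fn$ the element $g:=zf_0\in\mathcal L$ contributes $\lcm(g,f_0)(I_g\cap I_{f_0})=gI_{f_0}=zN$, using that $f_0\mid g$ forces $I_{f_0}\subseteq I_g$ along a full path via condition (2). For a general $g\ne f_0$, minimality of $f_0$ gives $m:=\lcm(g,f_0)\supsetneq f_0$, so the full path from $f_0$ to $m$ has a first step $f_0z_1\in\mathcal L$ with $z_1\in\fn$ and $f_0z_1\mid m$; then $\lcm(g,f_0)(I_g\cap I_{f_0})\subseteq mI_{f_0}=z_1(m/z_1)I_{f_0}$, and since $f_0\mid m/z_1$ we get $(m/z_1)I_{f_0}\subseteq f_0I_{f_0}=N$, so the whole term lies in $z_1N\subseteq\fn N$. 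This is exactly where fullness is essential: it converts an incomparable pair $(g,f_0)$ into a first-step variable of $\fn$. Finally, $B\cap N=\fn N\subseteq\fm B$ follows from condition (2) once more, since a generator $zf_0w$ of $\fn N$ (with $zf_0\in\mathcal L$ and $w\in G(I_{f_0})\subseteq\fm I_{zf_0}$) lies in $\fm\,(zf_0)I_{zf_0}\subseteq\fm B$. With all hypotheses of \Cref{cwl+cwl} verified, $L=B+N$ is componentwise linear, completing the induction.
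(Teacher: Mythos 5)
Your proof is correct and takes essentially the same route as the paper's: the same induction on $|\mathcal L|$ after removing a minimal element, the same computation of $B\cap N$ via \Cref{fIgJ} and the support condition, the same identification $B\cap N=\fn N$ using fullness to produce the first-step variables $z$ with $zf_0\in\mathcal L$, and the same final application of \Cref{cwl+cwl}. The only cosmetic differences are that the paper removes an element of minimal degree rather than one minimal under divisibility, and it routes the key inclusion through $I_{\lcm(f_0,g)}$ via its observation (2'), a step your argument shows can be bypassed since $I_g\cap I_{f_0}\subseteq I_{f_0}$ directly.
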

\begin{proof}
    We induct on $N$, the number of elements in $\mathcal{L}$. If $N=1$, the statement is clear. Before proceeding, we note that the condition (2) above and fullness of $\mathcal{L}$ imply
    \begin{itemize}
        \item[(2')] if $f,g\in\mathcal L$ and $f\mid g$, then $I_f\subseteq I_g$.
    \end{itemize}
    Let $N>1$ and suppose the statement is true for smaller values of $N$. Pick $f\in \mathcal L$ of minimal degree. Consider $\mathcal{H}=\mathcal{L}\setminus \{f\}$. Note that $\mathcal H$ is a full set, so by induction, the ideal $I=\sum_{g\in \mathcal H} gI_g$ is componentwise linear.

    We compute $I\cap fI_f$. For monomial ideals, intersection distributes over sum, so
    \begin{align*}
        I\cap fI_f
        &=\sum_{g\in\mathcal H} gI_g\cap fI_f\\
        &=\sum_{g\in\mathcal{H}} \lcm(f,g)((I_g:g')\cap (I_f:f')),
    \end{align*}
    where $f',g'$ are squarefree monomials such that $\lcm(f,g)=ff'=gg'$. Note that $g'$ consists of variables not in $\supp(g)$, and since $G(I_g)\subseteq k[\supp g]$, we have $(I_g: g')=I_g$. Similarly, $(I_f: f')=I_f$. Thus,
    \begin{align*}
        I\cap fI_f
        &=\sum_{g\in\mathcal H}\lcm(f,g)(I_g\cap I_f).
    \end{align*}
    If $h=\lcm(f,g)$, then $h\in \mathcal H$ and using (2') and fullness of $\mathcal L$, we have $$h(I_g\cap I_f)\subseteq h(I_h\cap I_f)\subseteq h I_f\subseteq zf I_f$$ for some variable $z$ such that $zf\in\mathcal{H}$.
    Thus we obtain
    \begin{align*}
        I\cap fI_f&= \sum_{zf\in \mathcal H}zfI_f=\mathfrak n fI_f,
    \end{align*}
    where $\mathfrak n=\langle z: zf\in \mathcal H\rangle$ is a monomial prime ideal such that $\Supp(\mathfrak n)\cap \Supp(fI_f)=\emptyset$. Note that for a variable $z\in\mathfrak n$, $zfI_f\subseteq \mathfrak m zf I_{zf}$ by condition (2), so $$I\cap fI_f=\mathfrak n fI_f\subseteq \mathfrak m \sum_{zf\in \mathcal H} zfI_{zf}\subseteq \mathfrak m\sum_{g\in \mathcal H}gI_g=\mathfrak m I,$$ hence $I+ fI_f=\sum_{m\in\mathcal{L}}mI_m$ is componentwise linear by \Cref{cwl+cwl}.
\end{proof}
\begin{corollary}\label{cor:maincwl}
    Let $\mathcal{L}$ be a full set of non-unit squarefree monomials in $S$. If $f\in \mathcal{L}$, let $I_f=\langle\supp f\rangle^{a_f-\deg f}$ for some integer $a_f\ge\deg f$. We further assume that if $f,g\in\mathcal L$ and $f\mid g$, then $a_f\geq a_g$. Then $\displaystyle\sum_{f\in \mathcal{L}} fI_f$ is a componentwise linear ideal.
\end{corollary}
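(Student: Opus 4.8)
The plan is to recognize this statement as a direct application of \Cref{thm: maincwl}, so that the work reduces to checking its two hypotheses for the specific assignment $I_f=\langle\supp f\rangle^{a_f-\deg f}$. First I would observe that each $I_f$ is a componentwise linear monomial ideal: it is a power of the monomial prime ideal $\langle\supp f\rangle$, which is generated by a subset of the variables, and any power of an ideal generated by linear forms has a linear resolution, hence is componentwise linear. (When $a_f=\deg f$ this is the unit ideal $S$, which is componentwise linear as well.) Condition (1) of \Cref{thm: maincwl} is then immediate, since $I_f$ is generated by monomials in the variables of $\supp f$, so $G(I_f)\subseteq k[\supp f]$.

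The substance of the argument is verifying condition (2). Fix $f\in\mathcal L$ and a variable $z$ with $zf\in\mathcal L$. Since $zf$ is squarefree we have $z\notin\supp f$, so $f\mid zf$, $\deg(zf)=\deg f+1$, and the standing monotonicity assumption gives $a_f\ge a_{zf}$. Writing $P=\langle\supp f\rangle$ and $Q=\langle\supp(zf)\rangle=P+(z)$, so that $P\subseteq Q\subseteq\fm$, I would then chain the containments
\[
I_f=P^{a_f-\deg f}\subseteq P^{a_{zf}-\deg f}\subseteq Q^{a_{zf}-\deg f}=Q\cdot Q^{a_{zf}-\deg f-1}\subseteq \fm\, I_{zf},
\]
where the first inclusion uses $a_f\ge a_{zf}$, the second uses $P\subseteq Q$, and the last uses $Q\subseteq\fm$ together with the identity $I_{zf}=Q^{a_{zf}-(\deg f+1)}$. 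This establishes $I_f\subseteq\fm I_{zf}$, as required.

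With both hypotheses in hand, \Cref{thm: maincwl} applies directly and yields that $\sum_{f\in\mathcal L}fI_f$ is componentwise linear. I do not anticipate any genuine obstacle here; the only point requiring care is the bookkeeping of exponents in the displayed chain, where the monotonicity assumption $a_f\ge a_{zf}$ is used precisely to absorb the unit increase in degree when passing from $f$ to $zf$, and one should double-check the edge case $a_{zf}=\deg(zf)$ (so that $I_{zf}=S$), in which the chain still closes since $\fm I_{zf}=\fm\supseteq Q\supseteq P=I_f$.
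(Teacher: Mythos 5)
Your proposal is correct and takes essentially the same approach as the paper: the paper's proof likewise reduces to checking hypothesis (2) of \Cref{thm: maincwl} via the same exponent bookkeeping, merely factoring out one copy of $\langle\supp f\rangle$ at the start of the chain rather than one copy of $\langle\supp(zf)\rangle$ at the end. One small nit: in your edge-case remark the equality $P=I_f$ should be the containment $P\supseteq I_f$ (since $a_f-\deg f\geq 1$ there, not $=1$), but your main displayed chain already handles that case correctly.
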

\begin{proof}
    We note that if $zf\in\mathcal L$ for some variable $z$ and $f\in\mathcal L$, then 
    \[
    I_f=\langle \supp f\rangle^{a_f-\deg f}=\langle \supp f\rangle \langle \supp f\rangle^{a_f-\deg f-1}\subseteq \mathfrak m \langle \supp(zf)\rangle^{a_{zf}-\deg f-1}=\fm I_{zf},
    \]
    so we are done by \Cref{thm: maincwl}.
\end{proof}
\begin{example}
    Let $S=k[x,y]$ and $\mathcal L=\{xy, x,y\}$. Then $\mathcal L$ is full and \Cref{cor:maincwl} says that 
    \[
    I=xy(x,y)+ (x)^3+ (y)^3
    \]
    is componentwise linear as $a_{xy}=a_x=a_y=3$. On the other hand,
    \[
    J=xy(x,y)^2+ (x)^3+ (y)^3=(x^3,x^2y^2,y^3)
    \]
    is not a componentwise linear ideal and we note that here $a_{xy}=4$ but $a_x=a_y=3< a_{xy}$. This illustrates the sharpness of \Cref{cor:maincwl} even in the case of two variables.
\end{example}

\section{Two Dimensional Polynomial Rings}\label{sec:2dim}
Throughout this section, let $R=k[x,y]$ be the standard graded polynomial ring in two variables and $\fm=(x,y)$ its homogeneous maximal ideal, unless stated otherwise. We recall from \Cref{section:prelims} that, in $k[x,y]$, being componentwise linear is equivalent to being full. In this section, some results concerning componentwise linear ideals are stated in terms of full ideals to indicate that, with suitable modification in terminology, they work over a two dimensional regular local ring.

A careful analysis of the proof of \cite[Proposition 4.2]{Long1} gives us the following:
\begin{lemma}\label{lem:ord}
    Let $I,J\subseteq R$ be full ideals. Then  \[
    o(I\cap J)=\max\{o(I),o(J)\}+ \ell\left(\frac{(I+J):z}{I:\fm+J:\fm}\right)
    \]
    for a general linear form $z\in R$.
\end{lemma}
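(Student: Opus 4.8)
The plan is to compute the length $\ell(R/(K,z))$ for the four ideals $K\in\{I,J,I\cap J,I+J\}$ and a \emph{single} general linear form $z$, and to tie these lengths together through the Mayer--Vietoris sequence reduced modulo $z$. The starting observation is that, for any nonzero ideal $K\subseteq R$ and general $z$, one has $\ell(R/(K,z))=o(K)$: the module $R/(K,z)$ is cyclic over $R/(z)\cong k[t]$, hence of the form $k[t]/(t^{\ell})$ with $\ell=\reg(R/(K,z))+1$, and the remark following \Cref{prop:regcwl2} records $\reg(R/(K,z))=o(K)-1$. Applying this to each ideal gives $\ell(R/(I,z))=o(I)$, $\ell(R/(J,z))=o(J)$, $\ell(R/(I\cap J,z))=o(I\cap J)$, and $\ell(R/(I+J,z))=o(I+J)=\min\{o(I),o(J)\}$.

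Next I would apply $-\otimes_R R/(z)=\overline{(-)}$ to the short exact sequence $0\to R/(I\cap J)\to R/I\oplus R/J\to R/(I+J)\to 0$. Resolving $R/(z)$ by $0\to R(-1)\xrightarrow{z}R\to R/(z)\to 0$ identifies $\Tor_1^R(R/K,R/(z))\cong \frac{K:z}{K}$ up to a degree shift and annihilates higher $\Tor$, yielding the six-term exact sequence
\[
0\to \tfrac{(I\cap J):z}{I\cap J}\to \tfrac{I:z}{I}\oplus\tfrac{J:z}{J}\to\tfrac{(I+J):z}{I+J}\to \overline{R/(I\cap J)}\to\overline{R/I}\oplus\overline{R/J}\to\overline{R/(I+J)}\to 0.
\]
Taking the alternating sum of lengths (shifts are irrelevant for length) and substituting the four orders computed above gives
\[
o(I\cap J)=\max\{o(I),o(J)\}+\ell\!\left(\tfrac{(I\cap J):z}{I\cap J}\right)-\ell\!\left(\tfrac{I:z}{I}\right)-\ell\!\left(\tfrac{J:z}{J}\right)+\ell\!\left(\tfrac{(I+J):z}{I+J}\right).
\]

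At this point I would invoke fullness. Choosing $z$ generic enough for both ideals, $I:z=I:\fm$ and $J:z=J:\fm$; since colons distribute over intersections, $(I\cap J):z=(I:z)\cap(J:z)=(I:\fm)\cap(J:\fm)=(I\cap J):\fm$, so $I\cap J$ is itself full and all three torsion terms may be rewritten with $\fm$ in place of $z$. To finish, I would identify the combination $\ell(\tfrac{(I\cap J):\fm}{I\cap J})-\ell(\tfrac{I:\fm}{I})-\ell(\tfrac{J:\fm}{J})$ with $-\ell(\tfrac{I:\fm+J:\fm}{I+J})$ via the exact sequence attached to the inclusions $I\subseteq I:\fm$, $J\subseteq J:\fm$,
\[
0\to\tfrac{(I:\fm)\cap(J:\fm)}{I\cap J}\to\tfrac{I:\fm}{I}\oplus\tfrac{J:\fm}{J}\to\tfrac{I:\fm+J:\fm}{I+J}\to 0,
\]
together with $(I:\fm)\cap(J:\fm)=(I\cap J):\fm$. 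Substituting, and using $I:\fm+J:\fm\subseteq(I+J):\fm\subseteq(I+J):z$ to write $\ell(\tfrac{(I+J):z}{I+J})-\ell(\tfrac{I:\fm+J:\fm}{I+J})=\ell(\tfrac{(I+J):z}{I:\fm+J:\fm})$, collapses everything to the claimed formula.

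The main subtlety is the passage from $(I\cap J):z$ to $(I\cap J):\fm$, i.e.\ the fullness of $I\cap J$. For a general finite-length quotient the generic $z$-torsion strictly exceeds the socle (e.g.\ $R/(x^2,y^2)$ has socle of length $1$ but $\ell(0:z)=2$ for general $z$), so this step genuinely uses that \emph{both} $I$ and $J$ are full, and it is the only place fullness enters beyond the individual colon identities. The remainder is careful bookkeeping of lengths in the two exact sequences, together with the routine verification that one general $z$ can be chosen to meet simultaneously the finitely many genericity conditions ($\ell(R/(K,z))=o(K)$ for the four ideals, and $I:z=I:\fm$, $J:z=J:\fm$).
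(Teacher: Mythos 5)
Your proof is correct and follows essentially the same route as the paper's: tensor the Mayer--Vietoris sequence $0\to R/(I\cap J)\to R/I\oplus R/J\to R/(I+J)\to 0$ with $R/(z)$, use fullness of $I$ and $J$ to identify the $\Tor_1$ terms with $\frac{I:\fm}{I}$ and $\frac{J:\fm}{J}$, and count lengths. The only difference is bookkeeping: the paper computes the image of $\frac{I:\fm}{I}\oplus\frac{J:\fm}{J}\to\frac{(I+J):z}{I+J}$ to be $\frac{I:\fm+J:\fm}{I+J}$ and takes lengths in the resulting truncated four-term sequence, whereas you take the alternating sum over the full six-term sequence and then collapse it via your auxiliary short exact sequence and the chain $I+J\subseteq I:\fm+J:\fm\subseteq (I+J):z$ --- which amounts to the same image computation.
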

\begin{proof}
    Consider the exact sequence
    \[
    0\to \frac{R}{I\cap J}\to \frac RI\oplus \frac RJ\to \frac R{I+J}\to 0.
    \]
    We apply $-\otimes R/(z)=\overline{(-)}$ to obtain
    \begin{equation*}
        \cdots \to\frac{I:\fm}{I}(-1)\oplus \frac{J:\fm}{J}(-1)\to \frac{(I+J):z}{I+J}(-1)\to \overline{ \frac{R}{I\cap J}}\to \overline{\frac RI}\oplus \overline{\frac RJ}\to \overline{\frac R{I+J}}\to 0.
    \end{equation*}
    The image of $$\frac{I:\fm}{I}\oplus \frac{J:\fm}{J}\to \frac{(I+J):z}{I+J}$$ is $\dfrac{I:\fm+J:\fm}{I+J}$, so we obtain the exact sequence
    \[
    0\to \frac{(I+J):z}{I:\fm+ J:\fm}(-1)\to \overline{ \frac{R}{I\cap J}}\to \overline{\frac RI}\oplus \overline{\frac RJ}\to \overline{\frac R{I+J}}\to 0.
    \]
    Finally, we observe that $\ell(\overline{R/I})=o(I)$ and $o(I+J)=\min\{o(I),o(J)\}$.
\end{proof}
\begin{prop}\label{prop:maxlength}
    Let $I,J\subseteq R$ be such that $I+J$ is a full ideal. Then $$0\leq o(I\cap J)-\max\{o(I),o(J)\}\leq 1.$$
\end{prop}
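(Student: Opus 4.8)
The plan is to treat the two inequalities separately. The lower bound $o(I\cap J)\ge\max\{o(I),o(J)\}$ is immediate: since $I\cap J\subseteq I$ and $I\cap J\subseteq J$, any nonzero form in $I\cap J$ has degree at least $o(I)$ and at least $o(J)$. All the content lies in the upper bound $o(I\cap J)\le\max\{o(I),o(J)\}+1$, which I would prove by exhibiting an explicit nonzero element of $I\cap J$ in degree $a+1$, where $a:=\max\{o(I),o(J)\}$. Note that $I_a\ne 0$ and $J_a\ne 0$, since $a\ge o(I)$ and $a\ge o(J)$.

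First I would reduce modulo a general linear form in order to align the two ideals in degree $a$. For a general linear form $z$, passing to $\bar R:=R/(z)\cong k[\bar t]$ preserves orders: the image of any homogeneous ideal $K$ is $(\bar t^{\,o(K)})$, because a fixed nonzero form is divisible by only finitely many linear forms, so a general $z$ divides none of them. Hence $\bar I_a$ and $\bar J_a$ are both the one-dimensional space $k\,\bar t^{\,a}$, so I may pick $f\in I_a$ and $g\in J_a$ with $\bar f=\bar g=\bar t^{\,a}$. Then $\overline{f-g}=0$, so $f-g=zh$ for some $h\in R_{a-1}$.

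Now the fullness of $I+J$ enters. Since $I+J$ is full in $k[x,y]$ it is $\fm$-full (\Cref{rmk:2dfull}), and for a general $z$ this yields $(I+J):z=(I+J):\fm$. As $zh=f-g\in I+J$, we obtain $h\in (I+J):z=(I+J):\fm$, so $\fm h\subseteq I+J$. Choosing a second general linear form $z'$, not proportional to $z$, we then have $z'h\in I+J$, say $z'h=p'+q'$ with $p'\in I$ and $q'\in J$ homogeneous of degree $a$. I set
\[
w:=z'f-zp'=z'g+zq',
\]
the two expressions agreeing because $z'(f-g)-z(p'+q')=z'(zh)-z(z'h)=0$. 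The first form of $w$ shows $w\in I$ and the second shows $w\in J$, so $w\in (I\cap J)_{a+1}$. Finally, reducing modulo $z$ gives $\bar w=\overline{z'}\,\bar t^{\,a}=c\,\bar t^{\,a+1}$ with $c\ne 0$, whence $w\ne 0$ and $o(I\cap J)\le a+1$.

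The crux, and the step I expect to be most delicate, is leveraging fullness of the \emph{sum} $I+J$ rather than of $I$ and $J$ individually (the latter is what powers the clean formula of \Cref{lem:ord}, which does not apply here). The device that makes this possible is the alignment $f-g=zh$ coming from the one-variable reduction: fullness upgrades the single relation $zh\in I+J$ to $z'h\in I+J$ for a transverse form $z'$, and it is precisely this extra relation that lets me symmetrize $z'f$ and $z'g$ into a common element of $I\cap J$. Two minor points to nail down are the justification that $(I+J):z=(I+J):\fm$ holds for a general (rather than merely some) linear form $z$, which follows from the $\fm$-full equivalence of \Cref{rmk:2dfull}, and the nonvanishing $w\ne 0$, for which reduction modulo $z$ is the cleanest route and sidesteps all case analysis (for instance the degenerate case $f=g$, where one simply has $w=z'f\in I\cap J$).
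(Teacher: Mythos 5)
Your proof is correct, and it takes a genuinely different route from the paper's. The paper argues homologically: it tensors the Mayer--Vietoris sequence
\[
0\to \frac{R}{I\cap J}\to \frac RI\oplus \frac RJ\to \frac R{I+J}\to 0
\]
with $R/(z)$ for a general linear form $z$, identifies the resulting Tor term with $\frac{(I+J):z}{I+J}(-1)$, and uses fullness of $I+J$ to conclude this module is killed by $\fm$, so the image of the connecting map lies in the socle of $A/(t^{o})$, where $A=R/(z)$ is a DVR and $o=o(I\cap J)$. A length count over $A$ in the two resulting cases (image zero, or image equal to the socle) yields $o=\max\{o(I),o(J)\}$ or $o=\max\{o(I),o(J)\}+1$ in one stroke. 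You instead handle the lower bound by the trivial inclusions and the upper bound by an explicit element construction: align $f\in I_a$ and $g\in J_a$ modulo $z$, write $f-g=zh$, use fullness to upgrade $zh\in I+J$ to $z'h=p'+q'\in I+J$, and exhibit $w=z'f-zp'=z'g+zq'$ as a nonzero element of $(I\cap J)_{a+1}$. Your $w$ is essentially an explicit witness for the paper's connecting homomorphism: the class of $h$ in $((I+J):z)/(I+J)$ is exactly the obstruction the paper tracks, and your relation $z'h\in I+J$ is the statement that this class is annihilated by $\fm$. What the paper's version buys is the sharper dichotomy (either $o=\max$ or $o=\max+1$, detected by whether the socle image vanishes), which is the same mechanism behind \Cref{lem:ord} and hence \Cref{thm:fullsum}; what yours buys is a self-contained, element-theoretic argument avoiding Tor and length counting. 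The two points you flagged are indeed sound: the passage from fullness of $I+J$ to $(I+J):z=(I+J):\fm$ for a \emph{general} $z$ is precisely the move the paper itself makes (full implies $\fm$-full in two variables by \Cref{rmk:2dfull}, and $\fm$-fullness with respect to $z$ gives $(I+J):z=(I+J):\fm$ for that same $z$), and reduction modulo $z$ does dispose of the nonvanishing of $w$ uniformly, including the degenerate case $f=g$. One cosmetic simplification: $z'$ need not be general, as any linear form not proportional to $z$ works, since all you use is $z'\in\fm$ and $\overline{z'}\neq 0$.
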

\begin{proof}
    Consider the short exact sequence
    \[
    0\to \frac{R}{I\cap J}\to \frac RI\oplus \frac RJ\to \frac R{I+J}\to 0.
    \]
    Let $z$ be a general linear form and tensor the above sequence with $R/(z)=A$, where $A$ is a discrete valuation ring with parameter $t$. We obtain the long exact sequence
    \[
    \cdots\to\frac{(I+J):z}{I+J}(-1)\to \frac{A}{(t^{o})}\to \frac{A}{(t^{o(I)})}\oplus \frac{A}{(t^{o(J)})}\to\cdots
    \]
    where $o=o(I\cap J)$. Since $I+J$ is full, $(I+J):z=(I+J):\fm$, so the leftmost term is killed by $\fm$, thus the image must live inside the socle of $A/(t^o)$. If the image is zero, then
    \[
    0\to \frac{A}{(t^o)}\to \frac{A}{(t^{o(I)})}\oplus \frac{A}{(t^{o(J)})}\to \frac{A}{(t^{o(I+J)})}\to 0
    \]
    is exact and we obtain $o=\max\{o(I),o(J)\}$. Otherwise, the cokernel is $A/(t^{o-1})$ and we obtain the exact sequence
    \[
    0\to \frac A{(t^{o-1})}\to \frac{A}{(t^{o(I)})}\oplus\frac{A}{(t^{o(J)})}\to \frac{A}{(t^{o(I+J)})}\to 0,
    \]
    thus $o-1=\max\{o(I),o(J)\}$.
\end{proof}
We can now write down a precise condition for the sum of two full ideals to be full.
\begin{theorem}\label{thm:fullsum}
    Let $I,J\subseteq R$ be full ideals. Then $I+J$ is full if and only if either of the following holds:
    \begin{enumerate}
        \item $o(I\cap J)=\max\{o(I),o(J)\}$;
        \item $o(I\cap J)=\max\{o(I),o(J)\}+1$ and $(I+J):\fm\neq I:\fm+ J:\fm$.
    \end{enumerate}
\end{theorem}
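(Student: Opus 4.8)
The plan is to reduce the entire statement to a single additivity-of-length computation along a short chain of colon ideals, feeding in \Cref{lem:ord} and \Cref{prop:maxlength} as the two key inputs. Fix a general linear form $z$ and abbreviate $m=\max\{o(I),o(J)\}$ together with
$$A=I:\fm+J:\fm,\qquad B=(I+J):\fm,\qquad C=(I+J):z.$$
First I would record the chain $A\subseteq B\subseteq C$: the inclusion $A\subseteq B$ holds because $a\in I:\fm$ gives $\fm a\subseteq I\subseteq I+J$ (and symmetrically for $J:\fm$), while $B\subseteq C$ is immediate since $z\in\fm$. Because we work in $k[x,y]$, where full is equivalent to $\fm$-full, \Cref{rmk:2dfull} lets me choose $z$ general enough that simultaneously $I:z=I:\fm$, $J:z=J:\fm$, and---crucially---$I+J$ is full if and only if $(I+J):z=(I+J):\fm$, i.e. if and only if $C=B$. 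This translation is the pivot of the argument: fullness of the sum becomes the vanishing of $\ell(C/B)$.

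With this dictionary in place, \Cref{lem:ord} reads $o(I\cap J)-m=\ell(C/A)$, and additivity of length along $A\subseteq B\subseteq C$ gives $\ell(C/A)=\ell(C/B)+\ell(B/A)$. For the ``if'' direction I would argue as follows. If (1) holds then $\ell(C/A)=0$, forcing $A=B=C$, so $C=B$ and $I+J$ is full. If (2) holds then $\ell(C/A)=1$ and $A\neq B$, hence $\ell(B/A)\ge 1$; additivity then forces $\ell(B/A)=1$ and $\ell(C/B)=0$, again yielding $C=B$.

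For the ``only if'' direction, assume $I+J$ is full, so $C=B$ and $\ell(C/B)=0$. \Cref{prop:maxlength} bounds $o(I\cap J)-m\in\{0,1\}$, i.e. $\ell(C/A)\in\{0,1\}$. If $\ell(C/A)=0$ we land in case (1). If $\ell(C/A)=1$, then from $\ell(C/A)=\ell(C/B)+\ell(B/A)=\ell(B/A)$ we read off $\ell(B/A)=1$, hence $A\neq B$, which is exactly the extra hypothesis of case (2), while $o(I\cap J)=m+1$ is its first half. Since the two cases correspond to distinct values of $o(I\cap J)$, they are mutually exclusive, and together they exhaust the possibilities.

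The one genuinely delicate point---the step I would be most careful about---is the consistent choice of the general linear form $z$: I need a single $z$ that is simultaneously general for \Cref{lem:ord}, general for \Cref{prop:maxlength}, and witnesses the fullness of each of $I$, $J$, and (in the forward direction) $I+J$. Since each requirement excludes only a proper closed condition on $z$, a sufficiently general $z$ meets them all, and the equivalence ``$I+J$ full $\iff C=B$'' is precisely the full/$\fm$-full correspondence recorded in \Cref{rmk:2dfull}. Everything else is bookkeeping with the additive length function on the three-term chain $A\subseteq B\subseteq C$.
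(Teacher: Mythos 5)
Your proof is correct and follows essentially the same route as the paper's: both arguments pivot on translating fullness of $I+J$ into $(I+J):z=(I+J):\fm$ for a general linear form $z$, and then combine \Cref{lem:ord}, \Cref{prop:maxlength}, and additivity of length along the chain $I:\fm+J:\fm\subseteq (I+J):\fm\subseteq (I+J):z$. The one point of divergence is case (1) of the ``if'' direction: the paper disposes of it by citing \cite[Corollary 4.3]{Long1}, whereas you observe it already follows from \Cref{lem:ord}, since $o(I\cap J)=\max\{o(I),o(J)\}$ forces the length of the whole chain to vanish and hence collapses it. This makes your write-up slightly more self-contained; otherwise the two proofs are the same computation.
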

\begin{proof}
    Suppose $I+J$ is full. By \Cref{prop:maxlength}, we can assume $o(I\cap J)=\max\{o(I),o(J)\}+1$. Since $(I+J):z=(I+J):\fm$ for a general linear form $z$, Lemma \ref{lem:ord} implies $\ell((I+J):\fm/(I:\fm+J:\fm))=1$, so $(I+J):\fm\neq I:\fm+ J:\fm$.

    Conversely, if (1) holds, then $I+J$ is full by \cite[Corollary 4.3]{Long1}. Next suppose (2) holds. Then using Lemma \ref{lem:ord}, we obtain $\ell((I+J):z/I:\fm+J:\fm)=1$. Since
    \[
    1=\ell\left(\frac{(I+J):z}{I:\fm+ J:\fm}\right)=\ell\left(\frac{(I+J):\fm}{I:\fm+J:\fm}\right)+ \ell\left(\frac{(I+J):z}{(I+J):\fm}\right),
    \]
    we must have $(I+J):z=(I+J):\fm$, i.e., $I+J$ is full.
\end{proof}
\begin{example}
    Consider the full ideals $I=(x^4,x^2y^2,x^3y,y^3)$ and $J=(x^4,x^2y,xy^2,y^4)$. The sum $I+J=(x^4,x^2y,xy^2,y^3)$ is full. Here the order of $I\cap J$ is $4=\max\{o(I),o(J)\}+1$ and $(I+J):\fm=(x^3,xy,y^2)$ and $I:\fm+ J:\fm=(x^3,xy,y^3)$.
\end{example}

\begin{prop}\label{mu(I+J)}
    Let $I,J$ be full ideals and assume $\mu(I+J)=\mu(I)+\mu(J)$. Then $I+J$ is full if and only if $o(I\cap J)\leq \max\{o(I),o(J)\}+1$.
\end{prop}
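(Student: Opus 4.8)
The plan is to prove both implications by reducing to \Cref{thm:fullsum} and then doing the real work in the single remaining case. Set $m=\max\{o(I),o(J)\}$, and note $o(I\cap J)\ge m$ always (since $I\cap J\subseteq I$ and $I\cap J\subseteq J$). The forward direction is immediate: if $I+J$ is full, then \Cref{prop:maxlength} gives $o(I\cap J)-m\le 1$, which is exactly the stated inequality (and does not use the generator hypothesis). For the converse, the hypothesis $o(I\cap J)\le m+1$ forces $o(I\cap J)\in\{m,m+1\}$. When $o(I\cap J)=m$, condition (1) of \Cref{thm:fullsum} holds and $I+J$ is full with no further input. So the entire content is the case $o(I\cap J)=m+1$, where by condition (2) of \Cref{thm:fullsum} it suffices to establish $(I+J):\fm\neq I:\fm+J:\fm$.

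The first step in that case is to convert the numerical hypothesis into a containment. Tensoring the short exact sequence $0\to I\cap J\to I\oplus J\to I+J\to 0$ with $k$, the map $(I\cap J)\otimes k\to (I\oplus J)\otimes k$ has image of dimension $\mu(I)+\mu(J)-\mu(I+J)$. Thus $\mu(I+J)=\mu(I)+\mu(J)$ is equivalent to this map being zero, i.e. to $I\cap J\subseteq \fm I\cap\fm J$. I would record this equivalence (or at least the implication I need) as the entry point to the geometric argument.

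The crux is then to produce an explicit element of $((I+J):\fm)\setminus(I:\fm+J:\fm)$. Since $o(I\cap J)=m+1$, choose $0\neq c\in (I\cap J)_{m+1}$; by the previous paragraph $c\in\fm I\cap\fm J$, so in the two-variable ring we may write
\[
c=xg+yh=xg'+yh',\qquad g,h\in I_m,\ g',h'\in J_m.
\]
Comparing the two expressions gives $x(g-g')=y(h'-h)$, and because $R=k[x,y]$ is a UFD this yields a single form $w\in R_{m-1}$ with $g-g'=yw$ and $h'-h=xw$. Then $yw=g-g'\in I+J$ and $xw=h'-h\in I+J$, so $w\in (I+J):\fm$. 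To see $w\notin I:\fm+J:\fm$, suppose $w=u+v$ with $u\in I:\fm$ and $v\in J:\fm$; then $g-yu=g'+yv$ and $h+xu=h'-xv$ both lie in $I\cap J$ in degree $m$, hence vanish since $(I\cap J)_m=0$. Substituting $g=yu$ and $h=-xu$ back into $c=xg+yh$ gives $c=0$, contradicting $c\neq 0$. Therefore $(I+J):\fm\neq I:\fm+J:\fm$, and \Cref{thm:fullsum} completes the proof.

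I expect the witness construction to be the main obstacle: one has to realize that the lowest-degree element $c$ of $I\cap J$ should be run through its two representations coming from $\fm I$ and $\fm J$, and that the Koszul syzygy in two variables then extracts the degree-$(m-1)$ form $w$, after which the vanishing $(I\cap J)_m=0$ forces the contradiction. It is precisely here that the hypothesis $\mu(I+J)=\mu(I)+\mu(J)$ enters, since without the containment $I\cap J\subseteq\fm I\cap\fm J$ the element $c$ need not lie in $\fm I\cap\fm J$ and the twin expressions $c=xg+yh=xg'+yh'$ are simply unavailable. Everything else is bookkeeping with orders and the two quoted results.
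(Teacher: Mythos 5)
Your proof is correct, but the heart of it---showing $(I+J):\fm\neq I:\fm+J:\fm$ in the remaining case---is genuinely different from the paper's. Your reduction via \Cref{prop:maxlength} and \Cref{thm:fullsum} matches the paper exactly; where you diverge is the witness step. The paper argues by a pure length count: tensoring $0\to k^{\mu(I)-1}\to R/I\to R/(I:\fm)\to 0$ with $R/J$ (and the analogous sequence for $J$) bounds the first two quotients in the chain
\[
I+J\subseteq (I:\fm)+J\subseteq (I:\fm)+(J:\fm)\subseteq (I+J):\fm
\]
by $\mu(I)-1$ and $\mu(J)-1$, while $\ell\bigl(((I+J):\fm)/(I+J)\bigr)=\mu(I+J)-1=\mu(I)+\mu(J)-1$ by the two-variable fact that Cohen--Macaulay type equals $\mu-1$ (Hilbert--Burch); the generator hypothesis then forces the last inclusion to be strict. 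You instead convert $\mu(I+J)=\mu(I)+\mu(J)$ into the containment $I\cap J\subseteq\fm I\cap\fm J$ (your tensor argument for this equivalence is correct) and extract an explicit element $w\in((I+J):\fm)\setminus(I:\fm+J:\fm)$ from the Koszul syzygy between the two representations of a lowest-degree element of $I\cap J$. Both are valid. The paper's argument is shorter given the structure theory and, notably, never uses $o(I\cap J)=\max\{o(I),o(J)\}+1$ in this step; yours is more elementary (only the UFD property of $k[x,y]$), produces a concrete witness, and isolates the equivalence $\mu(I+J)=\mu(I)+\mu(J)\iff I\cap J\subseteq\fm I\cap\fm J$, which ties this proposition to the hypothesis of \Cref{HV}. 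In fact your construction also does not truly need $\deg c=\max\{o(I),o(J)\}+1$: running it on an element of degree $o(I\cap J)$ gives the same contradiction, so, like the paper's, it derives the non-equality from the generator hypothesis alone. Two small points of hygiene: when you write $w=u+v$ you should take $u,v$ homogeneous of degree $m-1$ (possible since all ideals involved are homogeneous, and your degree bookkeeping tacitly assumes it), and the degenerate possibility $w=0$ is absorbed by the same argument with $u=v=0$.
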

\begin{proof}
    By \Cref{thm:fullsum}, the only case left to show is when $o(I\cap J)=\max\{o(I),o(J)\}+1$, and it suffices to show $(I+J):\fm\neq (I:\fm)+(J:\fm)$. To this end, let $\ell(I:\fm/I)=s$ and write $(I:\fm)/I=k^s$. Applying $-\otimes R/J$ to the exact sequence
    \[
    0\to k^s\to R/I\to R/(I:\fm)\to 0
    \]
    gives us
    \[
    \cdots\to k^s\to \frac{R}{I+J}\to \frac R{(I:\fm)+J}\to 0.
    \]
    Thus $k^s$ surjects onto $(I:\fm)+J/(I+J)$, and hence
    $$s=\mu(I)-1\geq \ell\left(\frac{(I:\fm)+J}{I+J}\right).$$
    Let $a=\mu(I)$ and $b=\mu(J)$. Consider 
    \[
    I+J\subseteq (I:\fm)+J\subseteq (I:\fm)+(J:\fm)\subseteq (I+J):\fm.
    \]
    The first quotient has length at most $a-1$ and by a similar estimate the second has length at most $b-1$. But the length of $(I+J):\fm/(I+J)$ is $\mu(I+J)-1=a+b-1$, so $(I:\fm)+ (J:\fm)$ is strictly contained inside $(I+J):\fm$.
\end{proof}

\begin{prop}\label{linearform}
    Let $I\subseteq R$ be a full ideal and $f$ be a homogeneous element such that $d=\deg f\geq o(I)$ and $\mu(I+(f))=\mu(I)+1$. Then $I+(f)$ is full if and only if $(I:f)$ contains a linear form.
\end{prop}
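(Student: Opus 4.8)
The plan is to reduce everything to \Cref{mu(I+J)}, which applies verbatim once we observe that $J=(f)$ is principal. Since $(f)$ is generated by a single element, $\mu((f))=1$, and the hypothesis $\mu(I+(f))=\mu(I)+1$ is precisely the statement $\mu(I+(f))=\mu(I)+\mu((f))$ required by \Cref{mu(I+J)}. Therefore $I+(f)$ is full if and only if $o(I\cap(f))\leq\max\{o(I),d\}+1$. Because the hypothesis $d=\deg f\geq o(I)$ gives $\max\{o(I),d\}=d$, the fullness of $I+(f)$ is equivalent to the single numerical inequality $o(I\cap(f))\leq d+1$.

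The key computational step is to identify the intersection $I\cap(f)$. An element of $I\cap(f)$ is of the form $fh$ with $fh\in I$, that is, $h\in(I:f)$; conversely $f(I:f)\subseteq I\cap(f)$ is immediate. Hence
\[
I\cap(f)=f\,(I:f),
\]
so that $o(I\cap(f))=\deg f+o(I:f)=d+o(I:f)$. Substituting into the inequality above, $I+(f)$ is full if and only if $o(I:f)\leq 1$.

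It remains to translate $o(I:f)\le 1$ into the containment of a linear form. First, $f\notin I$: if we had $f\in I$, then $f$ would not be a new minimal generator and $\mu(I+(f))=\mu(I)$ would contradict the hypothesis. Consequently $1\notin(I:f)$, so $(I:f)$ is a proper ideal and $o(I:f)\geq 1$. Combining this with the inequality $o(I:f)\le 1$ shows that $I+(f)$ is full if and only if $o(I:f)=1$, which holds exactly when $(I:f)$ contains a nonzero element of degree $1$, i.e.\ a linear form. This completes the equivalence.

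I do not anticipate a genuine obstacle here; the argument is a short chain built on \Cref{mu(I+J)}. The only point requiring a moment of care is the bookkeeping of orders: one must use $d\geq o(I)$ to collapse the maximum to $d$, and one must verify that $(I:f)$ is proper (so that $o(I:f)\geq 1$) in order to upgrade the inequality $o(I:f)\le 1$ to the sharp equality $o(I:f)=1$, which is what makes the ``contains a linear form'' phrasing exactly correct rather than merely sufficient.
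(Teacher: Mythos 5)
Your proof is correct and takes essentially the same route as the paper's: both arguments rest on the identity $I\cap(f)=f(I:f)$, the hypothesis $d\geq o(I)$ collapsing the maximum of the orders to $d$, and the observation that $\mu(I+(f))=\mu(I)+1$ forces $f\notin I$, so $(I:f)$ is proper and an order-one element is exactly a linear form. The only cosmetic difference is that you route both implications through the equivalence in \Cref{mu(I+J)}, whereas the paper cites \Cref{prop:maxlength} directly for the forward direction and reserves \Cref{mu(I+J)} for the converse; since the relevant direction of \Cref{mu(I+J)} is itself derived from \Cref{prop:maxlength}, the substance is identical.
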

\begin{proof}
    Since $I+(f)$ is a full ideal, $d\leq o(I\cap (f))=o(f(I:f))\leq d+1$. But $(I:f)\neq (1)$, so we have $o(f(I:f))\geq d+1$. Thus $o(f(I:f))=d+1$ and hence $(I:f)$ must contain a linear form.

    Conversely, suppose $(I:f)$ contains a linear form. Then $o(I\cap (f))=d+1$ and thus $(I,f)$ is full by \Cref{mu(I+J)}.
\end{proof}
\begin{prop}\label{principalNlinear}
    Let $I$ be a full ideal, and $f$ be an element such that $d=\deg f\ge o(I)$. Suppose $I+(f)$ is a full ideal with $\mu(I+(f))=\mu(I)+1$. Then $(I:f)=(z)$ for some linear form $z$.
\end{prop}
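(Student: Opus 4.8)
The plan is to bootstrap from \Cref{linearform}. Since $I+(f)$ is full, $d\ge o(I)$, and $\mu(I+(f))=\mu(I)+1$, that proposition already gives that $(I:f)$ contains a linear form (its proof in fact shows $o(I\cap(f))=d+1$, i.e. $o(I:f)=1$). Note also $f\notin I$, so $(I:f)$ is a proper homogeneous ideal. Fix a linear form $z\in (I:f)$ and complete it to linear coordinates $z,w$ on $R$. Because $(I:f)$ is homogeneous and contains $z$, the image of $(I:f)$ in $R/(z)\cong k[w]$ is a homogeneous ideal of $k[w]$, hence $0$ or $(w^a)$ for some $a\ge 1$ (it cannot be the unit ideal since $(I:f)\ne R$). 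Thus either $(I:f)=(z)$, which is what we want, or $(I:f)=(z,w^a)$ with $a\ge 1$, which is $\fm$-primary. The entire problem reduces to excluding this second possibility.

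The extra leverage I would extract from $\mu(I+(f))=\mu(I)+1$ is the containment $I\cap(f)\subseteq \fm I$. This follows directly: if some $w_0\in I\cap(f)$ lay outside $\fm I$, it would be part of a minimal generating set of $I$; writing $w_0=fr$ and replacing $w_0$ by $f$, the element $w_0$ becomes redundant in $I+(f)$, forcing $\mu(I+(f))\le \mu(I)$, a contradiction. Since $I\cap(f)=f\,(I:f)$ by \Cref{fIgJ}, this yields $f\cdot(I:f)\subseteq \fm I$; evaluating on the two generators in the putative case $(I:f)=(z,w^a)$ gives in particular $zf\in\fm I$ and $w^af\in\fm I$.

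With these in hand, the heart of the argument is to manufacture a socle-type element contradicting the $\fm$-fullness of $I$. Assume $(I:f)=(z,w^a)$ with $a\ge 1$ and set $g:=fw^{a-1}$. First, $g\notin I$, since $g\in I$ would force $w^{a-1}\in(I:f)=(z,w^a)$, which is false. Second, since $\fm=(z,w)$,
\[
\fm g=(zg,\,wg)=\bigl(w^{a-1}(zf),\,w^a f\bigr)\subseteq \fm I,
\]
using $zf,w^af\in\fm I$ from the previous step together with the fact that $\fm I$ is an ideal. Now recall from \Cref{rmk:2dfull} that a full ideal in $k[x,y]$ is $\fm$-full, so $\fm I:\ell=I$ for a general linear form $\ell$. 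As $\ell\in\fm$ we get $\ell g\in\fm g\subseteq \fm I$, whence $g\in \fm I:\ell=I$, contradicting $g\notin I$. Hence $(I:f)=(z,w^a)$ is impossible and $(I:f)=(z)$.

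I expect the main obstacle to be precisely the construction of the witness $g=fw^{a-1}$: the hypothesis only controls $f$ against the \emph{generators} $z,w^a$ of $(I:f)$, and one must route this through the intermediate power $w^{a-1}$ to produce an element all of whose $\fm$-multiples lie in $\fm I$ while the element itself escapes $I$. The remaining ingredients — the structural dichotomy for $(I:f)$, the reduction of the generator-count hypothesis to $I\cap(f)\subseteq \fm I$, and the appeal to $\fm$-fullness — are routine once this element is identified. I would also verify that the boundary case $a=1$, where $(I:f)=\fm$ and $g=f$, is handled verbatim by the same computation, as it is.
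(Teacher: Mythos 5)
Your proof is correct, but it takes a genuinely different route from the paper's. You and the paper share the same entry point (\Cref{linearform}, giving a linear form $z\in (I:f)$), but diverge immediately after. The paper's proof is structural: it first shows $I$ is not $\fm$-primary (using the identity $\mu=o+1$ for $\fm$-primary full ideals in two variables), writes $I=aJ$ with $J$ full of finite colength, cancels $h=\gcd(a,f)$ to get $I:f=a_1(J:f_1)$, rules out $a_1$ being a unit by another generator count, and then reads off from $o(I:f)=1$ that $a_1$ is linear and $J:f_1=R$, so $(I:f)=(a_1)$. Your proof instead classifies $(I:f)$ as $(z)$ or $(z,w^a)$, converts the hypothesis $\mu(I+(f))=\mu(I)+1$ into $I\cap(f)=f(I:f)\subseteq\fm I$ (a clean observation the paper never isolates), and excludes the second case by producing the witness $g=fw^{a-1}$ with $\fm g\subseteq\fm I$ but $g\notin I$, contradicting $\fm$-fullness of $I$ via \Cref{rmk:2dfull}. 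Each approach has its merits: the paper's argument yields the linear form explicitly as the residual factor $a_1$ of the content of $I$, and is short once the dimension-two structure theory ($I=aJ$, $\mu=o+1$) is in hand; yours avoids that structure theory entirely, relying only on the equivalence of fullness and $\fm$-fullness, and the socle-type witness construction is closer in spirit to the $\fm$-full literature, so it may be more amenable to generalization in settings where a codimension-one factorization is unavailable. All steps check out, including the boundary case $a=1$ and the implicit facts that $f\notin I$ and that failure of $I\cap(f)\subseteq\fm I$ is witnessed by a homogeneous element.
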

\begin{proof}
    
    \Cref{linearform} shows that $(I:f)$ contains a linear form. We note that $I$ is not $\fm$-primary, because otherwise $o(I+(f))=o(I)=\mu(I)-1$ and $\mu(I+(f))-1=\mu(I)$, so $I+(f)$ would not be full.

    Write $I=aJ$, where $J$ is full ideal of finite colength. Let $h=\gcd(a,f)$ and set $a=ha_1$ and $f=hf_1$. We have
    \begin{align*}
        I:f=ha_1J:hf_1=a_1J: f_1= a_1(J:f_1).
    \end{align*}
    Suppose that $a_1$ is a unit. Then $a$ divides $f$. Write $f=af_2$. If $f_2\in J$, then we get $f\in I$, a contradiction. If $f_2\notin J$, then $I+(f)=a(J+(f_2))$ with $J+(f_2)$ an $\fm$-primary full ideal. Then $$\mu(I+(f))=\mu(J+(f_2))=o(J+(f_2))+1=o(J)+1=\mu(J)=\mu(I),$$ again a contradiction. Since the order of $(I:f)$ is one, $a_1$ must be a linear form and $(I:f)=(a_1)$.
\end{proof}

\begin{theorem}\label{thm: ordering}
    Let $I$ be a componentwise linear monomial ideal in $R=k[x,y]$. There exists an ordering of the minimal set of monomial generators $\{f_1,\ldots,f_s\}$ of $I$ such that $\deg f_1\leq \deg f_2\leq\cdots\leq \deg f_s$ and $I_j=(f_1,\ldots,f_j)$ is componentwise linear for all $1\leq j\leq s$.
\end{theorem}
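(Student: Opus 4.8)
The plan is to induct on $s=\mu(I)$, the key being a removal lemma: if $I$ is componentwise linear with $s\ge 2$ generators, I can delete a suitable minimal generator of maximal degree and stay componentwise linear. Granting this, I set $f_s$ to be that deleted generator, apply the inductive hypothesis to $I'=(G(I)\setminus\{f_s\})$ to order its generators $f_1,\dots,f_{s-1}$ with nondecreasing degrees and all partial ideals componentwise linear, and append $f_s$; since $\deg f_s$ is maximal the degree order is preserved, and $I_s=I$ is componentwise linear by hypothesis. To run the argument I translate everything into staircase combinatorics: write $G(I)=\{x^{a_i}y^{b_i}\}_{i=1}^{s}$ with $a_1>\dots>a_s\ge 0$ and $0\le b_1<\dots<b_s$, put $d_i=a_i+b_i$, and record that a monomial $x^ay^{j-a}$ lies in $I$ exactly when $a\in A_j:=\bigcup_{d_i\le j}[a_i,\,j-b_i]$. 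An equigenerated monomial ideal in $k[x,y]$ has a linear resolution iff its exponent set is an interval of consecutive integers (a gap forces a nonlinear first syzygy, while consecutive exponents give linear quotients), so $I$ is componentwise linear iff $A_j$ is an interval for every $j$.

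First I would locate a maximal-degree generator at an \emph{end} of the staircase. Let $D=\max_i d_i$. I claim $d_1=D$ or $d_s=D$. If not, some $f_i$ with $d_i=D$ is interior; inspecting $A_{D-1}$, every degree-$D$ generator is absent while every lower-degree generator is present. If such an $f_i$ had present neighbours $f_{i-1}$ (to the right, larger $a$) and $f_{i+1}$ (to the left) on both sides, then connectivity of $A_{D-1}$ would require $D-1\ge a_{i-1}+b_{i+1}-1$, whereas $a_{i-1}\ge a_i+1$ and $b_{i+1}\ge b_i+1$ give $a_{i-1}+b_{i+1}\ge d_i+2=D+2$, a contradiction. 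The same estimate, with $b_{i+k+1}\ge b_i+(k+1)$, rules out interior runs of any length $k+1$, so the degree-$D$ generators form runs meeting the ends of the order; in particular $f_1$ or $f_s$ has degree $D$.

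Next I would delete that extreme generator and verify the interval property survives. Say $d_s=D$ (the case $d_1=D$ is symmetric under $x\leftrightarrow y$) and set $I'=(f_1,\dots,f_{s-1})$, with exponent sets $A'_j$. For $j<D$ the generator $f_s$ contributes nothing, so $A'_j=A_j$ is an interval. For $j\ge D$ all generators are present, so $A_j$ is a genuine interval in which $f_s$'s piece $[a_s,\,j-b_s]$ is the leftmost (as $a_s$ is smallest); deleting the leftmost piece, while retaining every internal adjacency among $f_1,\dots,f_{s-1}$ that already held inside $A_j$, again yields an interval $A'_j=[a_{s-1},\,j-b_1]$. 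Hence $A'_j$ is an interval for all $j$, so $I'$ is componentwise linear and the removal lemma holds.

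The main obstacle is precisely the choice of which generator to delete: removing an arbitrary maximal-degree generator can destroy componentwise linearity — for $I=(x^3,x^2y^2,xy^3)$ deleting $x^2y^2$ yields $(x^3,xy^3)=x(x^2,y^3)$, which is not componentwise linear — so the real work is the combinatorial claim that a maximal-degree generator always sits at an end of the staircase, together with the equigenerated linear-resolution criterion underpinning the translation. Once these are in place the induction is routine. I note that one could instead build $I$ up one generator at a time and invoke \Cref{linearform}, reducing each step to showing that some variable times the new generator lies in the previous ideal; this is equivalent, but the staircase bookkeeping above makes the degree ordering transparent.
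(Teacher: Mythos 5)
Your proof is correct, but it takes a genuinely different route from the paper's. The paper argues \emph{bottom-up}: starting from a generator of minimal degree, it maintains the invariant $\reg I_j\leq o(J)$ (where $J$ is generated by the remaining generators), invokes \Cref{prop:maxlength} to get $o(I_j\cap J)=o(J)+1$, and from this extracts a generator $f_{j+1}\in G(J)$ of degree $o(J)$ with a variable multiple lying in $I_j$, so that \Cref{linearform} applies; this leans on the full-ideal machinery of the section (general linear forms, the equivalence of fullness and componentwise linearity in two variables). You instead run the induction \emph{top-down} via a removal lemma, and your argument is self-contained staircase combinatorics: componentwise linearity is equivalent to every exponent set $A_j$ being an interval (the standard two-variable fact that an equigenerated monomial ideal has linear resolution iff consecutive generators have lcm of degree $j+1$); a maximal-degree generator must sit at an end of the staircase, since an interior one (or an interior run) forces a gap in $A_{D-1}$ by your inequality $a_{i-1}+b_{i+k+1}\geq D+k+2$; and deleting an end generator only deletes connectivity constraints from the chain of intervals, so every $A'_j$ remains an interval. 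Both arguments are complete; your verification that end-deletion preserves intervals is sound because, with starts and ends of the intervals $[a_i,j-b_i]$ sorted in the same order, the union is an interval precisely when all consecutive pairs meet, and removing the extreme interval removes only one of those conditions. Your example $(x^3,x^2y^2,xy^3)$ correctly isolates the pitfall (an interior maximal-degree generator cannot be removed), which is exactly the point the choice of endpoint resolves. What the paper's route buys is that it passes through \Cref{linearform} and \Cref{principalNlinear}, so the linear-quotients refinement \Cref{cor_lq} (the colon ideal is generated by a single variable) follows immediately; with your approach one would still need \Cref{principalNlinear}, or a small additional staircase computation, to recover that corollary. What your route buys is independence from the fullness and general-linear-form machinery (in particular no infinite-field hypothesis is used anywhere), plus a completely explicit, constructive description of which generator to peel off at each stage.
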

\begin{proof}
    Let $f_1\in G(I)$ be a monomial of degree equals the order of $I$. Clearly, $I_1=(f_1)$ is componentwise linear. Inductively, suppose we have constructed a componentwise linear ideal $I_j=(f_1,\ldots,f_j)$ and write $I=I_j+J$, where $J$ is the ideal generated by the remaining set of minimal generators of $I$. We assume we have constructed $I_j$ in such a way that $\reg I_j\leq o(J)$. Since $I$ is componentwise linear, by \Cref{prop:maxlength}, $o(I_j\cap J)\leq \max\{o(I_j),o(J)\}+1=o(J)+1$. Now $I_j\cap J$ is generated by $L=\{\lcm(f_l, f_m)\mid f_l\in G(I_j), f_m\in G(J)\}$. Since $G(J)\cup G(I_j)=G(I)$ and $G(J)\cap G(I_j)=\emptyset$, each monomial in $L$ has degree at least $o(J)+1$. Thus, $o(I_j\cap J)=o(J)+1$. Choose $f_{j+1}\in G(J)$ of degree $o(J)$ so that $f_{j+1}$ times a variable is in $I_j\cap J$. We note that $o(I_j\cap (f_{j+1}))=o(J)+1$, and $I_j:f_{j+1}$ contains a linear form, so $I_{j+1}=I_j+(f_{j+1})$ is componentwise linear by \Cref{linearform}. The ideal $J'$ generated by the remaining generators satisfies the condition that $\reg I_{j+1}\leq o(J')$, so we can now repeat the argument with $I_{j+1}$ and $J'$.
\end{proof}
\begin{remark}
    For homogeneous ideals, \Cref{thm: ordering} not true. For example, consider the componentwise linear ideal $I=(x^2+y^2, x^3, y^3)$. There is no way to arrange this minimal system of generators so that \Cref{thm: ordering} holds.
\end{remark}
We now show that a componentwise linear monomial ideal in $R$ has linear quotients with respect to an ordering in which generators are arranged in a nondecreasing order of degrees.
\begin{corollary}\label{cor_lq}
    Let $I$ be a componentwise linear monomial ideal in $k[x,y]$. There exists an ordering of the minimal set of monomial generators $\{f_1,\ldots,f_s\}$ of $I$ such that $\deg f_1\leq \deg f_2\leq\cdots\leq \deg f_s$ and $(f_1,\ldots,f_j):f_{j+1}=(z_j)$ for all $1\leq j\leq s-1$, where each $z_j$ is a variable. 
\end{corollary}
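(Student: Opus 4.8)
The plan is to bootstrap off the degree-sorted filtration produced by \Cref{thm: ordering} and then feed each one-step extension into \Cref{principalNlinear}. First I would invoke \Cref{thm: ordering} to fix an ordering $f_1,\dots,f_s$ of $G(I)$ with $\deg f_1\le\cdots\le\deg f_s$ such that every truncation $I_j=(f_1,\dots,f_j)$ is componentwise linear. Recall from the start of this section that in $R=k[x,y]$ componentwise linear is equivalent to full, so each $I_j$ is full. This is the only place the hypothesis that $I$ is componentwise linear gets used.

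Next, I would fix $1\le j\le s-1$ and apply \Cref{principalNlinear} with the ideal $I_j$ in the role of $I$ and the element $f_{j+1}$ in the role of $f$. The three hypotheses are checked directly: $I_j$ is full by the previous step; $\deg f_{j+1}\ge\deg f_1=o(I_j)$ by the nondecreasing degrees; and $I_j+(f_{j+1})=I_{j+1}$ is full with $\mu(I_{j+1})=\mu(I_j)+1$. For the last point, observe that $f_1,\dots,f_{j+1}$ are among the minimal monomial generators of $I$, so none divides another, whence they form a minimal generating set of $I_{j+1}$ and $\mu(I_{j+1})=j+1=\mu(I_j)+1$. \Cref{principalNlinear} then yields $(I_j:f_{j+1})=(z_j)$ for some linear form $z_j$.

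Finally I would upgrade \emph{linear form} to \emph{variable}. Since $I_j$ is a monomial ideal and $f_{j+1}$ is a monomial, the colon $I_j:f_{j+1}=\bigcap_{g\in G(I_j)}\bigl(g/\gcd(g,f_{j+1})\bigr)$ is again a monomial ideal. A principal monomial ideal generated by a linear form must in fact be generated by a degree-one monomial, i.e. $z_j\in\{x,y\}$. Running this over all $1\le j\le s-1$ produces the claimed equalities $(f_1,\dots,f_j):f_{j+1}=(z_j)$.

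There is essentially no obstacle here: the statement is a clean consequence of the two preceding results, with the real content already absorbed into \Cref{thm: ordering} (which builds the degree-sorted chain of componentwise linear ideals) and \Cref{principalNlinear} (which identifies the relevant colon as principal on a linear form). The only points needing care are the bookkeeping of minimal generators that gives $\mu(I_{j+1})=\mu(I_j)+1$, and the elementary monomial observation that forces the linear generator $z_j$ to be a single variable rather than an arbitrary linear form.
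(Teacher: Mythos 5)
Your proof is correct and follows essentially the same route as the paper: invoke \Cref{thm: ordering} to get the degree-sorted componentwise linear (hence full) filtration $I_1\subseteq\cdots\subseteq I_s$, then apply \Cref{principalNlinear} to each step $I_j\subseteq I_{j+1}$; your extra bookkeeping (checking $\mu(I_{j+1})=\mu(I_j)+1$ via the fact that no minimal monomial generator of $I$ divides another, and the monomial-ideal observation forcing the linear form $z_j$ to be a variable) merely fills in details the paper leaves implicit. One small slip worth correcting: the colon of a monomial ideal by a monomial is the sum $I_j:f_{j+1}=\sum_{g\in G(I_j)}\bigl(g/\gcd(g,f_{j+1})\bigr)$, not the intersection; this does not affect your argument, since the only fact you use is that $I_j:f_{j+1}$ is again a monomial ideal, which is true.
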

\begin{proof}
    We retain the notation from \Cref{thm: ordering}. We claim that $I_{j}:f_{j+1}$ is generated by a variable. Indeed, by \Cref{thm: ordering}, we have $\deg f_{j+1}\geq o(I_j)$ and $I_{j+1}=I_{j}+(f_{j+1})$ is componentwise linear with $\mu(I_{j+1})=\mu(I_j)+1$, so we are done by \Cref{principalNlinear}.
\end{proof}
\begin{example}
    Take the componentwise linear ideal $I=(x^3,xy,y^3)$ and consider the minimal generators in that particular order. Then $I_1=(x^3)$ is componentwise linear, $I_1\cap (xy)=(x^3y)$ and $(I_1+(xy)):\fm=(x^2,xy)\neq (x^3)+(xy)$, so $I_2=I_1+(xy)$ is componentwise linear by \Cref{thm:fullsum}. Next, $I_2\cap (y^3)=(xy^3)$ and $(I_2+(y^3)):\fm=(x^2,xy,y^2)\neq (x^2,xy)+(y^3)$, so $I_3=I_2+(y^3)$ is componentwise linear, again by \Cref{thm:fullsum}. Note that the generators are not in a nondecreasing order of degrees.
\end{example}

\begin{lemma}\label{lem:regndeg}
    Let $S=k[x_1,\ldots,x_n]$ and $\fm$ be its homogeneous maximal ideal. Let $I\subseteq S$ be a homogeneous $\fm$-primary ideal and $f\in S$ be a nonzero homogeneous element. Then $\reg(I:f)+\deg f\leq \reg I$ if and only if $\deg f\leq \reg I$.
\end{lemma}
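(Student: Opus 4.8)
The plan is to prove the two implications separately, with essentially all the content living in the reverse direction. Write $r=\reg I$ and $d=\deg f$. For the forward implication there is nothing to do: $I:f$ is a nonzero ideal of $S$, hence generated in nonnegative degrees, so $\reg(I:f)\ge 0$; thus $\reg(I:f)+d\le r$ immediately forces $d\le r$. The substance is to show that $d\le r$ implies $\reg(I:f)\le r-d$.

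The key structural input is that $I$ being $\fm$-primary makes $S/I$ a module of finite length. For such an Artinian module the only nonvanishing local cohomology is $H^0_\fm(S/I)=S/I$, so its Castelnuovo--Mumford regularity equals its top nonzero degree; combined with the standard identity $\reg(S/I)=\reg I-1=r-1$, this says that $(S/I)_j=0$ for all $j\ge r$, i.e.\ $\fm^r\subseteq I$. This containment is the crux of the argument, and it is exactly where the $\fm$-primary hypothesis is used.

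With $\fm^r\subseteq I$ in hand I would argue as follows, assuming $f\notin I$ and $d\ge 1$; the cases $f\in I$ (giving $I:f=S$ and $\reg(I:f)=0$) and $d=0$ (giving $I:f=I$ and $\reg(I:f)=r$) are immediate once $d\le r$. The ideal $I:f$ is then a proper $\fm$-primary ideal, so $S/(I:f)$ is again finite length and $\reg(I:f)=\reg(S/(I:f))+1$ equals one more than the top nonzero degree of $S/(I:f)$. For any $j\ge r-d$ and any $g\in S_j$ we have $fg\in \fm^d\fm^j=\fm^{d+j}\subseteq\fm^r\subseteq I$, since $d+j\ge r$; hence $(I:f)_j=S_j$ and $(S/(I:f))_j=0$. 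Therefore the top nonzero degree of $S/(I:f)$ is at most $r-d-1$, whence $\reg(I:f)\le r-d$, as desired.

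There is no serious obstacle here: the argument is essentially formal once one recognizes that the $\fm$-primary assumption reduces everything to the top-degree description of regularity for finite-length modules, together with the equivalence $\reg I=r \Leftrightarrow \fm^r\subseteq I$ (and $\fm^{r-1}\not\subseteq I$). The only points requiring a little care are the bookkeeping of the degenerate cases $f\in I$ and $\deg f=0$, and ensuring that the identity $\reg(I:f)=\reg(S/(I:f))+1$ is invoked only when $I:f$ is proper.
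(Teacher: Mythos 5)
Your proof is correct and is essentially the paper's own argument: the paper packages the same degree computation as the injection $0\to \frac{S}{(I:f)}(-\deg f)\xrightarrow{f} \frac{S}{I}$ of finite-length modules, whose regularities are their top nonzero degrees. Your use of $\fm^{r}\subseteq I$ to show $(S/(I:f))_j=0$ for $j\geq r-\deg f$ is just the element-wise (contrapositive) form of the injectivity of multiplication by $f$, and your treatment of the degenerate cases $f\in I$ and $\deg f=0$ matches the paper's.
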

\begin{proof}
    If $\deg f\leq \reg I$, then either $f\in I$, in which case $(I:f)=S$ and $\reg (I:f)=0$, or $f\notin I$, in which case $(I:f)$ is an $\fm$-primary ideal. Then the inequality follows from the inclusion
    \[
    0\to \frac{S}{(I:f)}(-\deg f)\xrightarrow{f} \frac{S}{I}.
    \]
\end{proof}
\begin{lemma}\label{lem:regm_primary}
    Let $I,J\subseteq S$ be $\fm$-primary ideals. Then $\reg(I\cap J)\leq \max\{\reg I,\reg J\}$.
\end{lemma}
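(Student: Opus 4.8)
The plan is to pass to the cyclic quotients and exploit that all three ideals involved are $\fm$-primary, so that $S/I$, $S/J$, and $S/(I\cap J)$ are graded modules of finite length. The crucial fact I would use is that for a graded $S$-module $M$ of finite length one has $\reg M=\max\{j : M_j\neq 0\}$: since $M$ is annihilated by a power of $\fm$, its local cohomology is concentrated in $H^0_\fm(M)=M$ with all higher $H^i_\fm(M)=0$, and the local-cohomology description of regularity collapses to the top nonvanishing degree. Combined with the standard identity $\reg\mathfrak a=\reg(S/\mathfrak a)+1$ for a proper nonzero ideal $\mathfrak a$ — which follows from \Cref{lemma:reg} applied to $0\to\mathfrak a\to S\to S/\mathfrak a\to 0$ using $\reg S=0$ and $\reg\mathfrak a\geq 1$ — this reduces the lemma to a comparison of top degrees.

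Concretely, for an $\fm$-primary ideal $\mathfrak a$ set $t(\mathfrak a)=\max\{j: S_j\not\subseteq\mathfrak a\}$, so that $\reg(S/\mathfrak a)=t(\mathfrak a)$ and $\reg\mathfrak a=t(\mathfrak a)+1$. First I would observe that $S_j\subseteq I\cap J$ if and only if $S_j\subseteq I$ and $S_j\subseteq J$; negating, $(S/(I\cap J))_j\neq 0$ exactly when $(S/I)_j\neq 0$ or $(S/J)_j\neq 0$. Taking the largest such $j$ gives $t(I\cap J)=\max\{t(I),t(J)\}$, whence
\[
\reg(I\cap J)=t(I\cap J)+1=\max\{t(I)+1,\,t(J)+1\}=\max\{\reg I,\reg J\}.
\]
In fact this argument yields equality, which is stronger than the asserted inequality.

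The one place to be careful — and what I expect to be the only genuine obstacle — is that the naive route via the short exact sequence $0\to S/(I\cap J)\to S/I\oplus S/J\to S/(I+J)\to 0$ together with \Cref{lemma:reg} does \emph{not} suffice: it only bounds $\reg(S/(I\cap J))$ by $\max\{\reg(S/I),\reg(S/J),\reg(S/(I+J))+1\}$, and in the borderline case $\reg(S/I)=\reg(S/J)=\reg(S/(I+J))$ the spurious $+1$ cannot be removed by that estimate alone. Working directly with the top degrees of the finite-length modules, as above, sidesteps this entirely, so I would deliberately avoid the short exact sequence and argue degree by degree instead.
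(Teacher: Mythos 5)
Your proof is correct and rests on the same mechanism as the paper's own argument: for $\fm$-primary ideals, regularity is detected by the top nonvanishing degree of the finite-length quotient, which is exactly what the paper exploits by taking a top-degree socle element of $S/(I\cap J)$ and deriving a contradiction when $\reg(I\cap J)>\max\{\reg I,\reg J\}$. Your degree-by-degree formulation via $t(\mathfrak a)$ even yields the stronger conclusion $\reg(I\cap J)=\max\{\reg I,\reg J\}$, a refinement the paper's contradiction argument does not state but which follows from the same facts.
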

\begin{proof}
    Let $\bar f$ be a nonzero homogeneous socle element of $S/I\cap J$ with $\deg f=\reg(S/(I\cap J))=\reg(I\cap J)-1$. If $\reg (I\cap J)>\max\{\reg I, \reg J\}$, then $f\in I\cap J$, so $\bar f=0$, a contradiction.
\end{proof}
\begin{prop}\label{dim2maxprop}
    Let $I, J$ be homogeneous ideals in $R=k[x,y]$. If $I\cap J$ is not principal, then $\reg(I\cap J)\leq\max\{\reg I,\reg J\}$.
\end{prop}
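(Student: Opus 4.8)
The plan is to reduce to the $\fm$-primary case handled in \Cref{lem:regm_primary} by factoring out greatest common divisors. Since $R=k[x,y]$ is a UFD, I would write $I=aI_0$ and $J=bJ_0$, where $a=\gcd(I)$ and $b=\gcd(J)$ are forms and $I_0,J_0$ have unit gcd. A homogeneous ideal in $k[x,y]$ whose generators have no common factor has no common projective zero, so each of $I_0,J_0$ is either the unit ideal $R$ or $\fm$-primary. Because multiplication by a form is an injective, degree-shifting map, $\reg I=\deg a+\reg I_0$ and $\reg J=\deg b+\reg J_0$. Applying \Cref{fIgJ} to the factorizations $I=aI_0$ and $J=bJ_0$ gives
\[
I\cap J=\lcm(a,b)\,(A\cap B),\qquad A:=I_0:a',\quad B:=J_0:b',
\]
where $a'=\lcm(a,b)/a$ and $b'=\lcm(a,b)/b$, and hence $\reg(I\cap J)=\deg\lcm(a,b)+\reg(A\cap B)$.

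Each of $A,B$ is again either $R$ or $\fm$-primary, and I would observe that $I\cap J$ is principal precisely when $A\cap B=R$, that is, when both $A=R$ and $B=R$ (an $\fm$-primary ideal in $k[x,y]$ is never principal). Thus the hypothesis that $I\cap J$ is nonprincipal guarantees that at least one of $A,B$ is $\fm$-primary. The crucial step is a degree estimate: \emph{whenever $A$ is $\fm$-primary}, I claim $\deg\lcm(a,b)+\reg A\leq\reg I$. Since $A=I_0:a'\neq R$ we have $a'\notin I_0$ with $I_0$ $\fm$-primary, so the nonzero form $a'$ has nonzero class in $(S/I_0)_{\deg a'}$, whence $\deg a'\leq\reg I_0-1<\reg I_0$. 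Then \Cref{lem:regndeg}, applied to the $\fm$-primary ideal $I_0$ and the element $f=a'$, yields $\reg(I_0:a')+\deg a'\leq\reg I_0$; rewriting $\deg a'=\deg\lcm(a,b)-\deg a$ and adding $\deg a$ produces exactly $\deg\lcm(a,b)+\reg A\leq\deg a+\reg I_0=\reg I$. The symmetric inequality $\deg\lcm(a,b)+\reg B\leq\reg J$ holds when $B$ is $\fm$-primary.

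Finally I would combine these bounds according to how many of $A,B$ are $\fm$-primary. If both are, \Cref{lem:regm_primary} gives $\reg(A\cap B)\leq\max\{\reg A,\reg B\}$, so
\[
\reg(I\cap J)=\deg\lcm(a,b)+\reg(A\cap B)\leq\max\{\deg\lcm(a,b)+\reg A,\ \deg\lcm(a,b)+\reg B\}\leq\max\{\reg I,\reg J\}.
\]
If exactly one is $\fm$-primary, say $A$ with $B=R$, then $A\cap B=A$ and $\reg(I\cap J)=\deg\lcm(a,b)+\reg A\leq\reg I$; the remaining case is symmetric. Either way the desired inequality follows. I expect the main obstacle to be the degree bookkeeping of the middle paragraph, namely converting the regularity of the colon ideal $A=I_0:a'$ back into a bound expressed through $\reg I$. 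It is precisely the nonprincipality hypothesis that forces at least one relevant colon ideal to be $\fm$-primary, making \Cref{lem:regndeg} applicable; the hypothesis cannot be dropped, as the pair $I=(x^4,x^3y^2)$, $J=(y^4,x^2y^3)$ with principal intersection $I\cap J=(x^3y^3)$ shows.
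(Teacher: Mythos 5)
Your proof is correct and follows essentially the same route as the paper's: factor out the gcd to write $I=fI'$, $J=gJ'$ with $I',J'$ of finite colength, apply \Cref{fIgJ}, observe that nonprincipality of $I\cap J$ forces at least one colon ideal to be proper, and conclude via \Cref{lem:regndeg} and \Cref{lem:regm_primary}. If anything, you are slightly more careful than the paper, since you explicitly verify the hypothesis $\deg a'\leq\reg I_0$ needed to invoke \Cref{lem:regndeg} and you treat the case where one of the colon ideals equals $R$ separately rather than folding it into the maximum.
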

\begin{proof}
    Write $I=fI'$ and $J=gJ'$, where $I',J'$ are  ideals of finite colength (which could be $R$). If $\lcm(f,g)=ff'=gg'$ for some $f',g'\in R$, then
    \[
    I\cap J=\lcm(f,g)((I':f')\cap (J':g')).
    \]
    Note that $(I':f')=(J':g')=R$ if and only if $I\cap J$ is principal. Thus at least one of them, say $I':f'$, must be a proper ideal, so $\reg(I':f')\leq \reg I'-\deg f'$ by \Cref{lem:regndeg}. Thus
    \begin{align*}
        \reg(I\cap J)&=\deg \lcm(f,g)+ \reg((I':f')\cap (J':g'))\\
        &\leq \deg \lcm(f,g)+\max\{\reg I'-\deg f',\reg J'-\deg g'\}&&(\text{by \Cref{lem:regm_primary}})\\
        &=\max\{\reg I,\reg J\}.
    \end{align*}
\end{proof}

\begin{theorem}
    Let $I, J$ be componentwise linear ideals in $R=k[x,y]$. Suppose that $\reg(I\cap J)=\max\{\reg I,\reg J\}+1$. Write $I=fI', J=gJ'$ with $I',J'$ of finite colength. Let $\lcm(f,g)= ff'=gg'$. The following are equivalent: 
  \begin{enumerate}
    \item $I+J$ is componentwise linear.
    \item $(\fm^{s+1}I':f')\cap (\fm^{s+1}J':g')\subseteq \fm^s$ for all $s\geq 0$. 
  \end{enumerate}
\end{theorem}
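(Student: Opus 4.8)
The plan is to reduce everything to \Cref{HV} by first pinning down the structure of $I\cap J$ and then checking that the hypothesis of \Cref{HV} is automatically satisfied under the regularity assumption.

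First I would observe that the assumption $\reg(I\cap J)=\max\{\reg I,\reg J\}+1$ forces $I\cap J$ to be principal: by the contrapositive of \Cref{dim2maxprop}, if $I\cap J$ were not principal we would have $\reg(I\cap J)\le\max\{\reg I,\reg J\}$. Combining this with \Cref{fIgJ}, which gives
\[
I\cap J=fI'\cap gJ'=\lcm(f,g)\bigl((I':f')\cap(J':g')\bigr),
\]
and the fact that $(I':f')\cap(J':g')$ is then a principal ideal containing the finite-colength ideal $I'$ (so it is itself finite-colength), I conclude $(I':f')\cap(J':g')=R$, since the only finite-colength principal ideal is $R$. Hence $I\cap J=(\lcm(f,g))$, and in particular $f'\in I'$ and $g'\in J'$.

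The main obstacle — and the point that makes \Cref{HV} applicable — is to show we are always in the ``good'' case $I\cap J\subseteq\fm I\cap\fm J$, equivalently $f'\in\fm I'$ and $g'\in\fm J'$. Here the regularity hypothesis does the work. Suppose $f'\notin\fm I'$; since $f'\in I'$, it is then a minimal generator of $I'$, so $\deg f'\le\reg I'$. Using $I=fI'\cong I'(-\deg f)$ we get $\reg I=\deg f+\reg I'$, whence
\[
\reg(I\cap J)=\deg\lcm(f,g)=\deg f+\deg f'\le\deg f+\reg I'=\reg I\le\max\{\reg I,\reg J\},
\]
contradicting the hypothesis. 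Thus $f'\in\fm I'$, and symmetrically $g'\in\fm J'$, giving $I\cap J=(ff')\subseteq\fm I\cap\fm J$.

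Finally I would invoke \Cref{HV}. Since $I\cap J$ is principal it is componentwise linear, and the hypothesis $I\cap J\subseteq\fm I\cap\fm J$ has just been verified, so $I+J$ is componentwise linear if and only if $\fm^{s+1}I\cap\fm^{s+1}J=\fm^s(I\cap J)$ for all $s\ge 0$. Applying \Cref{fIgJ} to $\fm^{s+1}I=f(\fm^{s+1}I')$ and $\fm^{s+1}J=g(\fm^{s+1}J')$ rewrites the left-hand side as $\lcm(f,g)\bigl((\fm^{s+1}I':f')\cap(\fm^{s+1}J':g')\bigr)$, while $\fm^s(I\cap J)=\lcm(f,g)\fm^s$; cancelling the nonzerodivisor $\lcm(f,g)$ shows this equality is equivalent to $(\fm^{s+1}I':f')\cap(\fm^{s+1}J':g')=\fm^s$. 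Because $f'\in\fm I'$ and $g'\in\fm J'$, the inclusion $\fm^s\subseteq(\fm^{s+1}I':f')\cap(\fm^{s+1}J':g')$ holds automatically for every $s$, so this equality is in turn equivalent to condition (2). This closes both implications. The one subtlety to flag when writing this up is the ``$\subseteq$ versus $=$'' bookkeeping in the last step: condition (2) is stated as an inclusion, and it is precisely the automatic reverse inclusion guaranteed by $f'\in\fm I'$ and $g'\in\fm J'$ that upgrades it to the equality needed to feed into \Cref{HV}.
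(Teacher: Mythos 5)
Your argument is correct and is essentially the paper's own proof: principality of $I\cap J$ via \Cref{dim2maxprop}, the fact that the regularity of a componentwise linear ideal equals its maximal generator degree to deduce $f'\in\fm I'$ and $g'\in\fm J'$, and then \Cref{HV} combined with \Cref{fIgJ} to translate $\fm^{s+1}I\cap\fm^{s+1}J=\fm^s(I\cap J)$ into condition (2); your explicit handling of the inclusion-versus-equality bookkeeping at the end is the one detail the paper dismisses as ``easily seen.'' However, one justification needs repair: you assert that $(I':f')\cap(J':g')$ contains $I'$, which amounts to the claim $g'I'\subseteq J'$, and this is false in general --- for instance $f=xy$, $I'=(x,y)$, $g=x$, $J'=(x^2,xy,y^3)$ gives $g'=y$ and $y^2\in g'I'\setminus J'$. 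That containment only becomes true a posteriori, once you know the intersection equals $R$, so as written the step is circular. The fix is immediate and disturbs nothing downstream: $(I':f')\supseteq I'$ and $(J':g')\supseteq J'$, so each colon ideal has finite colength (or equals $R$), hence their intersection has finite colength; since a nonzero proper principal ideal in $k[x,y]$ always has infinite colength, the principal ideal $(I':f')\cap(J':g')$ must equal $R$.
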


\begin{proof}
    By \Cref{dim2maxprop} (and its proof), we see that $I\cap J =\lcm(f,g)$ is principal, or in other words, $f'\in I'$ and $g'\in J'$. The condition on regularity now translates to 
    $$ 0  =\max\{\reg I' -\deg f', \reg J'-\deg g'\}+1.$$
    As $I',J'$ are componentwise linear, their regularity coincide with the maximal degree of generators, so $f'\in \fm I', g'\in \fm J'$. It follows that $I\cap J \subset \fm(I\oplus J)$, and now \Cref{HV}
    shows that (1) is equivalent to 
    $\fm^{s+1}I\cap\fm^{s+1}J=\fm^s(I\cap J)$ for all $s\geq 0$, which can easily seen to be equivalent to (2). 
\end{proof}

\begin{example}\label{eg:regplusone}
    Consider the componentwise linear ideals $I=(x^3)(x^2,xy,y^3)$ and $J=(y^4)(x,y)$. The ideal $J$ has a linear resolution, $I\cap J=(x^3y^4)$ and $\reg(I\cap J)=\max\{\reg I, \reg J\}+1$. But $I+J$ is not componentwise linear. We note that $(\fm^3J':g')=(\fm^4:x^3)=\fm$ and $(\fm^3I':f')=(\fm^3(x^2,xy,y^3): y^4)=(x,y^2)$, so $(\fm^3 I':f')\cap (\fm^3 J': g')\nsubseteq \fm^2$.
\end{example}

\section*{Acknowledgements}
We thank T{\`a}i H{\`a}, Adam Van Tuyl, and Jeff Mermin for helpful conversations. The first author acknowledges partial support from Simons Foundation grant MP-TSM-00002378. Computations done using \texttt{Macaulay2} \cite{M2} have been crucial in our investigation. 

\bibliographystyle{amsalpha}
\bibliography{ref}

\providecommand{\bysame}{\leavevmode\hbox to3em{\hrulefill}\thinspace}
\providecommand{\MR}{\relax\ifhmode\unskip\space\fi MR }
\providecommand{\MRhref}[2]{%
  \href{http://www.ams.org/mathscinet-getitem?mr=#1}{#2}
}
\providecommand{\href}[2]{#2}
\begin{thebibliography}{HRW99}

\bibitem[CH03]{ConcaHerzog}
Aldo Conca and J\"urgen Herzog, \emph{Castelnuovo-{M}umford regularity of
  products of ideals}, Collect. Math. \textbf{54} (2003), no.~2, 137--152.
  \MR{1995137}

\bibitem[Dao21]{Long1}
Hailong Dao, \emph{On colon operations and special types of ideals}, Palest. J.
  Math. \textbf{10} (2021), no.~2, 383--388. \MR{4345683}

\bibitem[FVT07]{FVT}
Christopher~A. Francisco and Adam Van~Tuyl, \emph{Some families of
  componentwise linear monomial ideals}, Nagoya Math. J. \textbf{187} (2007),
  115--156. \MR{2354558}

\bibitem[GS]{M2}
Daniel~R. Grayson and Michael~E. Stillman, \emph{Macaulay2, a software system
  for research in algebraic geometry}, Available at
  \url{http://www2.macaulay2.com}.

\bibitem[HH99]{HerzogHibi}
J\"urgen Herzog and Takayuki Hibi, \emph{Componentwise linear ideals}, Nagoya
  Math. J. \textbf{153} (1999), 141--153. \MR{1684555}

\bibitem[HH11]{MonIdeals}
\bysame, \emph{Monomial ideals}, Graduate Texts in Mathematics, vol. 260,
  Springer-Verlag London, Ltd., London, 2011. \MR{2724673}

\bibitem[HI05]{HerzogIyengar}
J{\"u}rgen Herzog and Srikanth Iyengar, \emph{Koszul modules}, Journal of Pure
  and Applied Algebra \textbf{201} (2005), no.~1-3, 154--188.

\bibitem[HRW99]{HRW}
J.~Herzog, V.~Reiner, and V.~Welker, \emph{Componentwise linear ideals and
  {G}olod rings}, Michigan Math. J. \textbf{46} (1999), no.~2, 211--223.
  \MR{1704158}

\bibitem[HS06]{HunekeSwanson}
Craig Huneke and Irena Swanson, \emph{Integral closure of ideals, rings, and
  modules}, London Mathematical Society Lecture Note Series, vol. 336,
  Cambridge University Press, Cambridge, 2006. \MR{2266432}

\bibitem[HT10]{HoaTam}
Le~Tuan Hoa and Nguyen~Duc Tam, \emph{On some invariants of a mixed product of
  ideals}, Arch. Math. (Basel) \textbf{94} (2010), no.~4, 327--337.
  \MR{2643966}

\bibitem[HT22]{HaTuyl}
Huy~T{\`a}i H{\`a} and Adam~Van Tuyl, \emph{Powers of componentwise linear
  ideals: the {H}erzog--{H}ibi--{O}hsugi conjecture and related problems},
  Research in the Mathematical Sciences \textbf{9} (2022), no.~2, 22.

\bibitem[HW15]{HarimaWatanabe}
Tadahito Harima and Junzo Watanabe, \emph{Completely {${\mathfrak m}$}-full
  ideals and componentwise linear ideals}, Math. Proc. Cambridge Philos. Soc.
  \textbf{158} (2015), no.~2, 239--248. \MR{3310243}

\bibitem[IR09]{IyengarRomer}
Srikanth~B. Iyengar and Tim R\"omer, \emph{Linearity defects of modules over
  commutative rings}, J. Algebra \textbf{322} (2009), no.~9, 3212--3237.
  \MR{2567417}

\bibitem[MM10]{MM}
Fatemeh Mohammadi and Somayeh Moradi, \emph{Weakly polymatroidal ideals with
  applications to vertex cover ideals}, Osaka J. Math. \textbf{47} (2010),
  no.~3, 627--636. \MR{2768496}

\bibitem[Ngu15]{Hop}
Hop~D. Nguyen, \emph{Notes on the linearity defect and applications}, Illinois
  J. Math. \textbf{59} (2015), no.~3, 637--662. \MR{3554226}

\bibitem[NV19]{HopVu}
Hop~D. Nguyen and Thanh Vu, \emph{Powers of sums and their homological
  invariants}, J. Pure Appl. Algebra \textbf{223} (2019), no.~7, 3081--3111.
  \MR{3912960}

\bibitem[SV08]{SharifanVarbaro}
Leila Sharifan and Matteo Varbaro, \emph{Graded {B}etti numbers of ideals with
  linear quotients}, Matematiche (Catania) \textbf{63} (2008), no.~2, 257--265.
  \MR{2531666}

\bibitem[Wat87]{Watanabe}
Junzo Watanabe, \emph{{${\mathfrak m}$}-full ideals}, Nagoya Math. J.
  \textbf{106} (1987), 101--111. \MR{894414}

\end{thebibliography}
\end{document}